\numberwithin{equation}{section}
\theoremstyle{definition} 
\newtheorem{rem}{Remark}
\newtheorem{thm}{Theorem}
\newcommand{\bfU}{\mathbf{U}}
\newcommand{\bfV}{\mathbf{V}}
\newcommand{\bfS}{\mathbf{S}}
\newcommand{\bfF}{\mathbf{F}}
\newcommand{\bbP}{\mathbf{\Phi}}
\newcommand{\bfK}{\mathbf{K}}
\newcommand{\bfL}{\mathbf{L}}
\title{Reduced Augmentation Implicit Low-rank (RAIL) integrators for advection-diffusion and Fokker-Planck models}
\author[1]{Joseph Nakao}
\author[2]{Jing-Mei Qiu}
\author[3]{Lukas Einkemmer}
\affil[1]{Department of Mathematics and Statistics, Swarthmore College, Swarthmore, PA, USA}
\affil[2]{Department of Mathematical Sciences, University of Delaware, Newark, DE, USA}
\affil[3]{Department of Mathematics, University of Innsbruck, Innsbruck, Tyrol, Austria}
\date{}
\begin{document}

\maketitle


\begin{abstract}
\noindent This paper introduces a novel computational approach termed the Reduced Augmentation Implicit Low-rank (RAIL) method by investigating two predominant research directions in low-rank solutions to time-dependent partial differential equations (PDEs): dynamical low-rank (DLR), and step-and-truncation (SAT) tensor methods. The RAIL method is designed to enhance the efficiency of traditional full-rank implicit solvers, while maintaining accuracy and stability. We consider spectral methods for spatial discretization, and diagonally implicit Runge-Kutta (DIRK) and implicit-explicit (IMEX) RK methods for time discretization. The efficiency gain is achieved by investigating low-rank structures within solutions at each RK stage. In particular, we develop a reduced augmentation procedure to predict the basis functions to construct projection subspaces. This procedure balances algorithm efficiency and accuracy by incorporating as many bases as possible from previous RK stages, and by optimizing the basis representation through a singular value decomposition (SVD) truncation. As such, one can form implicit schemes for updating basis functions in a dimension-by-dimension manner, similar in spirit to the K-L step in the DLR framework. We propose applying a post-processing step to maintain global mass conservation. We validate the RAIL method through numerical simulations of advection-diffusion problems and a Fokker-Planck model. Our approach generalizes and bridges the DLR and SAT approaches, offering a comprehensive framework for efficiently and accurately solving time-dependent PDEs in the low-rank format with implicit treatment.
\end{abstract}

\noindent\textbf{Keywords:} dynamical low-rank, step-and-truncate, implicit-explicit method, advection-diffusion equation, Fokker-Planck


\section{Introduction}
\label{sec:introduction}

In the realm of computational fluid dynamics, traditional Eulerian mesh-based schemes have been a cornerstone for computing numerical solutions of time-dependent partial differential equations (PDEs). These high-order schemes begin with the spatial discretization of the continuous problem, coupled with different types of temporal discretizations tailored to the needs of the applications at hand. Recently, many researchers have assumed low-rank structures in high-dimensional PDE solutions in order to speed up computational runtimes while maintaining the original high-order accuracy, stability, and robustness of traditional methods. There have been two main lines of research for approximating the solution of time-dependent PDEs by a low-rank decomposition: the \textit{dynamical low-rank (DLR)} approach, and the \textit{step-and-truncation (SAT)} approach. Both the DLR and SAT methods have been used in a number of applications such as quantum mechanics \cite{Lubich2015a,Meyer2009}, plasma physics \cite{Kormann2015,Einkemmer2018,Einkemmer2020,AllmannRahn2022,Einkemmer2023}, radiative transfer transport \cite{PengDLR2020spherical,Einkemmer2021d,Coughlin2022,Kusch2022}, and biology \cite{Jahnke2008,Kusch2021a,Prugger2023,Einkemmer2023a}. These two approaches have a key difference in the order of performing the temporal discretization and low-rank projection in their scheme designs. In most of the literature, the DLR approach performs a low-rank projection before the temporal discretization. Whereas, the SAT approach performs the spatial and temporal discretizations before a low-rank projection.

The DLR methodology dates back to early work in quantum mechanics, e.g., \cite{Meyer1990,Lubich2008}. The overarching idea is to derive a set of differential equations for the low-rank factors by projecting the update onto the tangent space \cite{Koch2007}. The differential equations for the low-rank factors can then be discretized in time using an arbitrary explicit or implicit method in order to obtain a numerical solution. In the original formulation, the resulting differential equations are ill-conditioned in many situations and require regularization \cite{Kieri2016,Nonnenmacher2008}; this has only been addressed recently. The projector splitting integrator \cite{Lubich2014} and the basis updating and Galerkin (BUG) integrator (previously known as the unconventional integrator) \cite{Ceruti2022} avoid this ill-conditioning and attain schemes that are robust with respect to small singular values in the solution. The projector splitting integrator can be raised to higher order using composition (see, e.g., \cite{Einkemmer2018,Cassini2021}) at the price of significantly increased computational cost. However, a robust error bound for the projector splitting integrator only exists for order one. Whereas, the BUG integrator was originally first-order accurate in time and has recently been extended to second-order accuracy \cite{Ceruti2024}. A similar method in which all the differential equations for the bases and Galerkin coefficients are solved in parallel was recently presented in \cite{ceruti2024parallel} and extended to second-order accuracy \cite{Kusch2024}. Retraction-based DLR integrators have also been proposed for their ability to ensure the numerical solution remains on the low-rank manifold \cite{charous2023dynamically,kieri2019projection,seguin2024}; the projected exponential methods in \cite{Carrel2023} are very similar. Notably, the projected Runge-Kutta methods in \cite{kieri2019projection} have robust higher-order error bounds.

SAT methods offer an alternative approach in which the low-rank solution is updated over a single time-step and then truncated. Depending on the specific equation used, stepping a low-rank approximation forward in time with a time integrator increases the rank of the solution. Therefore, this step is followed by a truncation, usually using the singular value decomposition (SVD), that removes the singular values that are sufficiently small. The SAT approach has been applied to high-dimensional time-dependent problems fairly recently, particularly for kinetic models with earlier works in \cite{Dolgov2014,Ehrlacher2017,Kormann2015}. In the past few years, this approach has seen significant research efforts in the context of explicit schemes, e.g., \cite{GuoVlasovFlowMap2022, rodgers2022adaptive, guo2024conservative}. The approach, however, is not easily applicable to implicit methods, as seeking the low-rank representation of a solution in an implicit setting with efficiency and accuracy is more challenging.

Recently, implicit low-rank methods have seen increased interest in the literature. Rodgers and Venturi introduced an implicit approach by directly solving algebraic equations on tensor manifolds \cite{rodgers2022implicit}. El Kahza et al. \cite{ElKahza2024} recently presented a high-order rank-adaptive implicit integrator based on leveraging extended Krylov subspaces to efficiently populate the low-rank bases. Appelo and Cheng proposed a rank-adaptive scheme for handling equations with cross terms in which the bases are predicted explicitly and then used to implicitly solve a reduced matrix system \cite{Appelo2024}. Naderi et al. \cite{naderi2024cur} introduced a cost-effective Newton's method for solving stiff, nonlinear matrix differential equations arising from discretizations of random PDEs on low-rank matrix manifolds using high-order implicit multistep and Runge-Kutta (RK) methods.

We follow a similar framework to the DLR and SAT schemes, developing efficient low-rank solvers with implicit and implicit-explicit (IMEX) temporal discretizations of the PDE. The key algorithm design question for schemes that involve implicit time discretizations lies in effectively seeking basis functions for each dimension at future times. In this paper, we propose an efficient Reduced Augmentation Implicit Low-rank (RAIL) method for time-dependent PDEs. To achieve this, we first fully discretize the PDE in space using a spectral method \cite{hesthaven2007spectral,trefethen2000spectral}, and in time using diagonally implicit Runge-Kutta (DIRK) or IMEX Runge-Kutta methods as in the SAT approach. To update the low-rank factorization of the solution in an implicit fashion, we then follow a very similar approach to the augmented BUG integrator \cite{Ceruti2022a} to perform the basis update step. The distinctive feature in this work is leveraging the basis update and Galerkin projection procedure from \cite{Ceruti2022a} to look for low-rank factorizations of solutions {\em at each RK stage}. This is similar in spirit to the projected RK methods in \cite{kieri2019projection} in which the dynamics are projected at each stage. In order to balance efficiency and accuracy, we propose a reduced augmentation procedure to predict the bases for dimension-wise subspaces to be used in the projection step. The reduced augmentation procedure spans the bases from a prediction step together with those from all previous RK stages to construct richer bases. The procedure also has a follow-up SVD truncation to optimize efficiency in constructing these basis representations. This reduced augmentation procedure is designed with efficiency in mind (to truncate redundancy in the basis representations), while attempting to balance the accuracy of RK methods (to accommodate as many bases as possible from previous RK approximations). 

This ordering of first \textit{discretizing in time} and then performing a \textit{low-rank truncation} leverages ideas from both the DLR and SAT methodologies. In fact, for the implicit backward Euler scheme, we obtain the same numerical solution as the augmented BUG integrator. Thus, the proposed approach can be seen as a generalization of that method. We analyze the first-order RAIL scheme, and the accuracy of the higher-order scheme is shown through numerical experiments. We demonstrate the utility of our proposed method by performing numerical simulations for advection-diffusion problems and a Fokker-Planck model. We also demonstrate how it can be combined using the recently developed conservative projection procedure \cite{Einkemmer2021a,Einkemmer2022,guo2024conservative} to obtain a scheme that is globally mass conservative.

The remainder of the paper is organized as follows. Section \ref{sec:overall} formulates the matrix differential equation arising from discretizing the PDE. Section \ref{sec:implicit} presents the proposed RAIL scheme using implicit RK methods. Section \ref{sec:RAIL_IMEX} extends the proposed RAIL scheme to IMEX Runge-Kutta time discretizations. Section \ref{sec:numerics} presents numerical examples demonstrating the effectiveness and efficiency of the proposed method. Finally, a brief summary of the proposed work and the plan for future work is described in Section \ref{sec:conclusion}. The appendices includes: Butcher tableaus for the DIRK methods, a description of how we spatially discretized the advection-diffusion equations solved in this paper, and the summarized algorithms for the RAIL scheme, including a globally mass conservative truncation procedure.

\section{The matrix differential equation arising from PDE discretization in space}
\label{sec:overall}

To best illustrate the effectiveness of our proposed implicit and implicit-explicit low-rank algorithms, we consider advection-diffusion equations of the form
\begin{equation}
    \label{eq:adv_diff}
    \begin{cases}
    u_t + \nabla\cdot\left(\mathbf{a}(\mathbf{x},t)u\right) = \nabla\cdot(\mathbf{D}\nabla u) + \phi(\mathbf{x},t),&\mathbf{x}\in\Omega,\quad t>0,\\
    u(\mathbf{x},t=0)=u_0(\mathbf{x}),&\mathbf{x}\in\Omega,
    \end{cases}
\end{equation}
where $\mathbf{a}(\mathbf{x},t)$ describes the flow field, $\mathbf{D}$ is the anisotropic diffusion tensor, and  $\phi(\mathbf{x},t)$ is the source term. For context, several Fokker-Planck type kinetic models can be cast as an advection-diffusion equation of the form \eqref{eq:adv_diff}. Since such kinetic models typically possess low-rank solutions and are generally high-dimensional, equation \eqref{eq:adv_diff} acts as a good model to investigate storage and computational savings under a low-rank assumption.

Our proposed low-rank methodology relies on the Schmidt decomposition of two-dimensional solutions. The solution is decomposed as a linear combination of orthonormal basis functions, 
\begin{equation}
    \label{eq:schmidt}
    u(x,y,t) = \sum\limits_{i=1}^{r}{s_{i}(t) V_i^x(x,t)V_i^y(y,t)},
\end{equation}
where $r$ is the rank, $\{V_i^x(x,t)\text{ : }i=1,2,...,r\}$ and $\{V_i^y(y,t)\text{ : }i=1,2,...,r\}$ are orthonormal time-dependent bases in the $x$ and $y$ dimensions, respectively, and $\{s_{i}(t)\text{ : }i=1,2,..,r\}$ are the corresponding time-dependent coefficients. These basis functions are typically ordered by the magnitude of their coefficients $s_i(t)$ to allow for truncation based on some tolerance. At the discrete level, the Schmidt decomposition \eqref{eq:schmidt} can be interpreted as the SVD for the solution matrix on a tensor product of uniform computational grids, $\bf{X} \otimes \bf{Y}$, with 
\begin{equation}
\label{eq: 2dgrid}
\mathbf{X}:    
x_1 < \cdots < x_i <\cdots < x_{N}, \qquad
\mathbf{Y}:   
y_1 < \cdots < y_j <\cdots < y_{N}. 
\end{equation}

When low-rank structure exists in the solution, it can be exploited to reduce the computational complexity. The discrete analogue of the Schmidt decomposition \eqref{eq:schmidt} is the SVD
\begin{equation}
\label{eq:matrixsoln}
\mathbf{U}(t)  = \mathbf{V}^x(t) \mathbf{S}(t)(\mathbf{V}^y(t))^T,
\end{equation}
where $\mathbf{V}^x(t) \in \mathbb{R}^{N \times r}$ and $\mathbf{V}^y(t) \in \mathbb{R}^{N \times r}$ have orthonormal columns, and $\mathbf{S}(t)\in\mathbb{R}^{r\times r}$ is diagonal with entries organized in decreasing magnitude. It is important to note that in general the solution \eqref{eq:matrixsoln} is not necessarily the SVD; the SVD is a special case.

In this paper, we develop a low-rank approach that incorporates implicit time discretizations to deal with stiff terms in PDE models. Taking equation \eqref{eq:adv_diff} as an example, we will first discretize in space to get the matrix differential equation
\begin{equation}
    \label{eq:MOL}
    \frac{d\bfU}{d t} = \textbf{Ex}(t,\bfU) +  \textbf{Im}(t,\bfU)
   + \bbP(t). 
\end{equation}
Here $\bfU$ is the matrix solution of the PDE, and $\bbP$ is the matrix value of any possible source term on a 2D tensor product grid \eqref{eq: 2dgrid}. $\textbf{Ex}(t,\bfU)$ represents nonstiff operators treated explicitly in time, e.g., the advection terms in \eqref{eq:adv_diff}. $\textbf{Im}(t,\bfU)$ represents stiff operators treated implicitly in time, e.g., diffusion operators. Depending on the stiffness of the source term, $\mathbf{\Phi}$ could be treated explicitly or implicitly. At the matrix level, operators involving first and second derivatives in $x$ and $y$ can be well represented by matrix multiplication from the left and right, respectively, acting on the basis functions in the corresponding $x$ and $y$ directions. Appendix \ref{app:advdiffdisc} presents how we spatially discretize advection-diffusion equations with low-rank flow fields and diagonal diffusion tensors.

For treating explicit terms only, the ``step-and-truncate" approach in \cite{GuoVlasovFlowMap2022} first adds discretizations of the righthand side in the low-rank format ({\em step}), followed by truncating the updated solution based on some tolerance criteria ({\em truncate}). However, when implicit treatment is involved, seeking low-rank representations of solutions \eqref{eq:matrixsoln} is more challenging since knowledge of the future bases is required. This motivates the need to construct bases that can accommodate solutions for $t\in[t^n,t^{n+1}]$.

\section{The implicit RAIL scheme}
\label{sec:implicit}
We start by considering implicit discretizations of systems that can be expressed in the form
\begin{equation}
    \label{eq:MOL_implicit}
    \frac{d\bfU}{d t} = \bfF_{x} \bfU + \bfU \bfF_{y}^T.
\end{equation}

For simplicity, we consider the diffusion equation for which $\bfF_{x}$ and $\bfF_{y}$ are constant and discretize the Laplacian. However, more general models can be considered for which $\mathbf{F}_x$ depends on $x$ and $t$, and $\mathbf{F}_y$ depends on $y$ and $t$, e.g., Fokker-Planck type kinetic models. Throughout this paper, we only consider stiffly accurate DIRK methods for the time discretization \cite{Hairer1996}. The proposed method can be extended to other implicit time discretizations, e.g., linear multi-step methods; preliminary numerical results are shown in \cite{nakao2023thesis}. We start by fully discretizing the matrix differential equation \eqref{eq:MOL_implicit} in time. Let $\bfU^n\approx\bfU(t^n)$, or rather,
\[\bfV^{x,n}\bfS^{n}(\bfV^{y,n})^T\approx\bfV^{x}(t^n)\bfS(t^n)(\bfV^{y}(t^n))^T,\]
where $t^n=n\Delta t$ denotes the time after $n$ time-steps. Although the time-stepping size $\Delta t$ could vary at each time-step, we assume a constant $\Delta t$ for simplicity.

\subsection{The first-order implicit scheme}\label{sec:implicit_firstorderscheme}

Discretizing equation \eqref{eq:MOL_implicit} in time using backward Euler method, the resulting matrix equation is
\begin{equation}
	\label{eq:bEuler}
	\bfU^{n+1} = \bfU^{n} + \Delta t\Big(\bfF_{x} \bfU^{n+1} + \bfU^{n+1} \bfF_{y}^T\Big),
\end{equation}
where $\bfU^{n+1}=\bfV^{x,n+1}\bfS^{n+1}(\bfV^{y,n+1})^T$.

Following the spirit of DLR approach, we propose updating the bases dimension-by-dimension by projecting equation \eqref{eq:bEuler} onto low-dimensional subspaces spanned by $\bfV^{x,n+1}_{\star}$ and $\bfV^{y,n+1}_{\star}$. We define $\bfV^{x,n+1}_{\star}\coloneqq\bfV^{x,n}$ and $\bfV^{y,n+1}_{\star}\coloneqq\bfV^{y,n}$ since the bases at $t^n$ are all the information we currently know. By projecting the solution and matrix equation in one spatial dimension, we can then update the basis vectors one dimension at a time. In the DLR literature, these are commonly known as the K and L steps, in which $\mathbf{K}$ and $\mathbf{L}$ (or $K$ and $L$ if the projection is performed at the continuous level before discretizing in space) respectively denote the solution projected in $y$ and $x$. DLR methods (including the BUG integrators) then solve the resulting matrix differential equations for $\mathbf{K}^{n+1}$ and $\mathbf{L}^{n+1}$. In our proposed method, we perform the orthogonal projection after the time discretization. For the proposed first-order scheme in which $\mathbf{V}^{\cdot,n+1}_{\star}\coloneqq\mathbf{V}^{\cdot,n}$, we let
\begin{subequations}
\begin{align}
\begin{split}\label{eq:K_bE}
\bfK^{n+1} &\coloneqq \bfV^{x,n+1}\bfS^{n+1}(\bfV^{y,n+1})^T \bfV^{y,n}\in\mathbb{R}^{N\times r^{n}},
\end{split}
\\
\begin{split}\label{eq:L_bE}
\bfL^{n+1} &\coloneqq \bfV^{y,n+1}(\mathbf{S}^{n+1})^T(\bfV^{x,n+1})^T\bfV^{x,n}\in\mathbb{R}^{N\times r^{n}},
\end{split}
\end{align}
\end{subequations}
where $r^n<N$ is the rank of the solution at $t^n$, and we have taken the transpose of the solution $\mathbf{U}^{n+1}$ when defining $\mathbf{L}^{n+1}$ for convenience. Without loss of generality, we only show work for the K step since the L step follows the same procedure. Projecting equation \eqref{eq:bEuler} in $y$ using $\bfV^{y,n+1}_{\star}=\bfV^{y,n}$,
\begin{equation}\label{eq:Kstep1_bE}
\mathbf{K}^{n+1} = \Bigg[\mathbf{U}^n + \Delta t\Big(\bfF_{x} \bfU^{n+1} + \bfU^{n+1} \bfF_{y}^T\Big)\Bigg]\bfV^{y,n}.
\end{equation}

A natural way to solve equation \eqref{eq:Kstep1_bE} is to rewrite it as a Sylvester equation. Looking at equation \eqref{eq:Kstep1_bE}, we clearly see that $\mathbf{F}_x\bfU^{n+1}\bfV^{y,n}=\mathbf{F}_x\mathbf{K}^{n+1}$. However, we must project the solution $\mathbf{U}^{n+1}$ in the $\mathbf{F}_y$ term to get a Sylvester form. Approximating this term with the projected solution $\mathbf{U}^{n+1}\bfV^{y,n}(\bfV^{y,n})^T\mathbf{F}_y^T\bfV^{y,n}$,
\begin{equation}\label{eq:Kstep2_bE}
\mathbf{K}^{n+1} = \mathbf{U}^n\bfV^{y,n} + \Delta t\Big(\bfF_{x}\mathbf{K}^{n+1} + \mathbf{K}^{n+1}(\bfF_{y}\bfV^{y,n})^T\bfV^{y,n}\Big).
\end{equation}

As such, the K equation to solve is the Sylvester equation
\begin{equation}\label{eq:Kstep3_bE}
\Big(\mathbf{I}-\Delta t\mathbf{F}_x\Big)\mathbf{K}^{n+1} - \mathbf{K}^{n+1}\Big(\Delta t(\mathbf{F}_y\bfV^{y,n})^T\bfV^{y,n}\Big) = \bfV^{x,n}\bfS^{n}.
\end{equation}

Similarly, the L equation to solve is the Sylvester equation
\begin{equation}\label{eq:Lstep3_bE}
\Big(\mathbf{I}-\Delta t\mathbf{F}_y\Big)\mathbf{L}^{n+1} - \mathbf{L}^{n+1}\Big(\Delta t(\mathbf{F}_x\bfV^{x,n})^T\bfV^{x,n}\Big) = \bfV^{y,n}(\bfS^{n})^T.
\end{equation}

Note that equations \eqref{eq:Kstep3_bE} and \eqref{eq:Lstep3_bE} can be solved in parallel. Referring back to how we defined $\mathbf{K}^{n+1}$ in \eqref{eq:K_bE}, the desired orthonormal basis in $x$ can be computed via a reduced QR factorization, $\mathbf{K}^{n+1}=\mathbf{QR}\eqqcolon\bfV^{x,n+1}_{\ddagger}\mathbf{R}$. Similarly, $\mathbf{L}^{n+1}=\mathbf{QR}\eqqcolon\bfV^{y,n+1}_{\ddagger}\mathbf{R}$. Here the double dagger $\ddagger$ denotes the orthonormal bases obtained from the $\mathbf{K}$ and $\mathbf{L}$ equations predicting the solution basis at $t^{n+1}$. We toss the upper triangular $\mathbf{R}$ from the QR factorizations since we only need an orthonormal basis.

With $\bfV^{\cdot,n+1}_{\ddagger}$ computed, we propose augmenting it with $\bfV^{\cdot,n}$. By containing bases at times $t^{n+1}$ and $t^n$, the updated basis spans an enriched subspace that approximates the basis $\bfV^{\cdot}(t)$ over the interval $[t^n,t^{n+1}]$. This is particularly important when the solution rapidly evolves over short time intervals. We note that one could equivalently augment $\bfV^{x,n}$ with $\mathbf{K}^{n+1}$ (and augment $\bfV^{y,n}$ with $\mathbf{L}^{n+1}$). This augmentation procedure for the K and L steps was proposed in the augmented BUG integrator \cite{Ceruti2022a}. Up until this point, the proposed first-order scheme has been identical to the augmented BUG integrator assuming backward Euler time discretization.

Here is where the proposed first-order scheme differs from the augmented BUG integrator. The augmented basis $[\bfV^{\cdot,n+1}_{\ddagger},\bfV^{\cdot,n}]$ is size $N\times (r^{n+1}_{\ddagger}+r^n)$, that is, roughly $N\times 2r^n$. Although rank $2r^n$ is not too large, the augmented bases could have several redundancies, particularly if the solution is not changing rapidly. This could dramatically increase the computational cost if we eventually want to consider augmenting more bases. To help improve the computational efficiency, we propose a \textit{reduced augmentation} procedure to determine the updated bases. Simply put, we truncate the augmented bases according to a small tolerance of $10^{-12}$. This tolerance is large enough to significantly reduce the rank, but small enough to not dramatically affect the scheme. Computing the reduced QR factorizations of the augmented bases,
\begin{subequations}\label{eq:redaug}
\begin{align}
\begin{split}
\Big[\ \bfV^{x,n+1}_{\ddagger}\ ,\ \bfV^{x,n}\ \Big] &= \mathbf{Q}_x\mathbf{R}_x,
\end{split}
\\
\begin{split}
\Big[\ \bfV^{y,n+1}_{\ddagger}\ ,\ \bfV^{y,n}\ \Big] &= \mathbf{Q}_y\mathbf{R}_y.
\end{split}
\end{align}
\end{subequations}
Let $\hat{r}^{n+1}$ be the maximum of the number of singular values of $\mathbf{R}_x$ and $\mathbf{R}_y$ larger than $10^{-12}$. Further let $\hat{\bfV}^{x,n+1}$ be $\mathbf{Q}_x$ multiplied on the right by the first $\hat{r}^{n+1}$ left singular vectors of $\mathbf{R}_x$; let $\hat{\bfV}^{y,n+1}$ be $\mathbf{Q}_y$ multiplied on the right by the first $\hat{r}^{n+1}$ left singular vectors of $\mathbf{R}_y$.

With the updated bases $\hat{\bfV}^{x,n+1}$ and $\hat{\bfV}^{y,n+1}$ computed via reduced augmentation, we can now use them to perform a Galerkin projection to update the coefficients held in $\hat{\mathbf{S}}^{n+1}$; this is commonly known as the S step in the DLR literature. Here we have let hats denote the factorized solution produced by the KLS procedure. Projecting equation \eqref{eq:bEuler} in both dimensions by multiplying on the left by $(\hat{\bfV}^{x,n+1})^T$ and on the right by $\hat{\bfV}^{y,n+1}$, we get the Sylvester equation
\begin{align}\label{eq:Sstep_bE}
\begin{split}
\Big(\mathbf{I} - \Delta t(\hat{\bfV}^{x,n+1})^T\mathbf{F}_x\hat{\bfV}^{x,n+1}\Big)\hat{\mathbf{S}}^{n+1} - &\hat{\mathbf{S}}^{n+1}\Big(\Delta t(\mathbf{F}_y\hat{\bfV}^{y,n+1})^T\hat{\bfV}^{y,n+1}\Big)\\
&= (\hat{\bfV}^{x,n+1})^T\bfV^{x,n}\bfS^n(\bfV^{y,n})^T\hat{\bfV}^{y,n+1}.
\end{split}
\end{align}

Similar to the augmented BUG integrator, as well as the SAT framework, we perform one final (SVD-type) truncation on the updated solution $\hat{\bfV}^{x,n+1}\hat{\bfS}^{n+1}(\hat{\bfV}^{y,n+1})^T$ to optimize computational efficiency. The simplest strategy is to compute the SVD $\hat{\bfS}^{n+1} = \mathbf{A}\mathbf{\Sigma}\mathbf{B}^T$ and truncate the singular values according to some tolerance $\epsilon$. Letting $r^{n+1}\leq\hat{r}^{n+1}$ be the number of singular values larger than $\epsilon$, the final solution is $\bfU^{n+1} = \bfV^{x,n+1}\bfS^{n+1}(\bfV^{y,n+1})^T$, where $\bfV^{x,n+1}=\hat{\bfV}^{x,n+1}\mathbf{A}(\ :\ ,1:r^{n+1})$, $\bfS^{n+1}=\hat{\bfS}^{n+1}(1:r^{n+1},1:r^{n+1})$, and $\bfV^{y,n+1}=\hat{\bfV}^{y,n+1}\mathbf{B}(\ :\ ,1:r^{n+1})$.


\subsection{The second-order implicit scheme}\label{sec:implicit_secondorderscheme}

The novelty and advantage of the reduced augmentation that distinguish it from the augmented BUG integrator \cite{Ceruti2022a} are best observed in the higher-order extension. With the first-order implicit scheme established, we extend to the following two-stage second-order DIRK method \cite{Ascher1997,Hairer1996}:
\begin{subequations}
\begin{align}\label{eq:DIRK2.1}
\begin{split}
	\bfU^{(1)} &= \bfU^n + \nu\Delta t\Big(\mathbf{F}_x\bfU^{(1)}+\bfU^{(1)}\mathbf{F}_y^T\Big),
\end{split}\\
\begin{split}\label{eq:DIRK2.2}
	\bfU^{n+1} &= \bfU^n + (1-\nu)\Delta t\Big(\mathbf{F}_x\bfU^{(1)}+\bfU^{(1)}\mathbf{F}_y^T\Big) + \nu\Delta t\Big(\mathbf{F}_x\bfU^{n+1}+\bfU^{n+1}\mathbf{F}_y^T\Big),
\end{split}
\end{align}
\end{subequations}
where $\nu=1-\sqrt{2}/2$ and $\bfU^{(1)}=\bfV^{x,(1)}\bfS^{(1)}(\bfV^{y,(1)})^T$ approximates the solution at time $t^{(1)}=t^n+\nu\Delta t$. Since this second-order DIRK method has two stages in equations \eqref{eq:DIRK2.1}-\eqref{eq:DIRK2.2}, we propose performing the KLS(+truncate) process at \text{both} stages. Doing so gives the flexibility to project onto different subspaces at each stage.

Since the first stage in equation \eqref{eq:DIRK2.1} is a backward Euler discretization over a step-size of $\nu\Delta t$, we use the first-order RAIL scheme presented in subsection \ref{sec:implicit_firstorderscheme} to efficiently update the intermediate solution $\bfU^{(1)}$ in factorized form. All that remains is performing the KLS(+truncate) process on equation \eqref{eq:DIRK2.2}. Since equation \eqref{eq:DIRK2.2} balances terms from many stages, we need the basis vectors in $\bfV^{\cdot,n+1}_{\star}$ to span a richer space. In the first-order RAIL scheme, we let $\bfV^{\cdot,n+1}_{\star}\coloneqq\bfV^{\cdot,n}$. But in the current situation, we also have the bases computed at the first RK stage that we want to include/augment in our projection space. Herein lies a major difference between the first- and second-order RAIL schemes: the first-order RAIL scheme only utilized the reduced augmention for the S step, but the second stage of the second-order RAIL scheme utilizes the reduced augmentation for the K-L step \textit{and} the S step. For the K-L step, we construct the approximate bases $\bfV^{\cdot,n+1}_{\star}$ by reducing the augmented matrices
\begin{subequations}\label{eq:augDIRK2}
\begin{align}
\begin{split}
\Big[\ \bfV^{x,n+1}_{\dagger}\ ,\ \bfV^{x,(1)}\ ,\ \bfV^{x,n}\ \Big],
\end{split}
\\
\begin{split}
\Big[\ \bfV^{y,n+1}_{\dagger}\ ,\ \bfV^{y,(1)}\ ,\ \bfV^{y,n}\ \Big],
\end{split}
\end{align}
\end{subequations}
according to tolerance $10^{-12}$, where $\bfV^{\cdot,n+1}_{\dagger}$ is a first-order prediction at time $t^{n+1}$ computed using the first-order RAIL scheme. For the same reason that $\bfV^{\cdot,n+1}_{\ddagger}$ was included in the augmentation for the S step in the first-order RAIL scheme, we include $\bfV^{\cdot,n+1}_{\dagger}$ in the augmentation \eqref{eq:augDIRK2} for the K and L steps since it enriches the space with information at time $t^{n+1}$ and is relatively cheap to compute. Note that reducing the augmented bases \eqref{eq:augDIRK2} is critical for maintaining computational efficiency since the rank would have otherwise been roughly $3r^n$. Projecting the solution $\bfU^{n+1}$ onto $\bfV^{y,n+1}_{\star}$ or $\bfV^{x,n+1}_{\star}$ respectively defines
\begin{subequations}
\begin{align}
\begin{split}\label{eq:K_DIRK2}
\bfK^{n+1} &\coloneqq \bfV^{x,n+1}\bfS^{n+1}(\bfV^{y,n+1})^T \bfV^{y,n+1}_{\star}\in\mathbb{R}^{N\times r^{n+1}_{\star}},
\end{split}
\\
\begin{split}\label{eq:L_DIRK2}
\bfL^{n+1} &\coloneqq \bfV^{y,n+1}(\mathbf{S}^{n+1})^T(\bfV^{x,n+1})^T\bfV^{x,n+1}_{\star}\in\mathbb{R}^{N\times r^{n+1}_{\star}}.
\end{split}
\end{align}
\end{subequations}

Equation \eqref{eq:DIRK2.2} can be rewritten as
\begin{equation}
	\label{eq:DIRK2.2_v2}
	\bfU^{n+1} = \mathbf{W}^{(1)} + \nu\Delta t\Big(\bfF_{x} \bfU^{n+1} + \bfU^{n+1} \bfF_{y}^T\Big),
\end{equation}
where
\begin{equation}
	\label{eq:DIRK2.2_RHS}
	\mathbf{W}^{(1)} = \bfU^{n} + (1-\nu)\Delta t\Big(\bfF_{x} \bfU^{(1)} + \bfU^{(1)} \bfF_{y}^T\Big).
\end{equation}

Projecting equation \eqref{eq:DIRK2.2_v2} onto the subspace spanned by the columns of $\bfV^{y,n+1}_{\star}$, and following the same process that yielded equation \eqref{eq:Kstep3_bE} from \eqref{eq:Kstep1_bE}, we get the Sylvester equation
\begin{equation}\label{eq:Kstep_DIRK2.2}
\Big(\mathbf{I}-\nu\Delta t\mathbf{F}_x\Big)\mathbf{K}^{n+1} - \mathbf{K}^{n+1}\Big(\nu\Delta t(\mathbf{F}_y\bfV^{y,n+1}_{\star})^T\bfV^{y,n+1}_{\star}\Big) = \mathbf{W}^{(1)}\bfV^{y,n+1}_{\star}.
\end{equation}

Similarly, the L equation is the Sylvester equation
\begin{equation}\label{eq:Lstep_DIRK2.2}
\Big(\mathbf{I}-\nu\Delta t\mathbf{F}_y\Big)\mathbf{L}^{n+1} - \mathbf{L}^{n+1}\Big(\nu\Delta t(\mathbf{F}_x\bfV^{x,n+1}_{\star})^T\bfV^{x,n+1}_{\star}\Big) = (\mathbf{W}^{(1)})^T\bfV^{x,n+1}_{\star}.
\end{equation}

After solving equations \eqref{eq:Kstep_DIRK2.2}-\eqref{eq:Lstep_DIRK2.2} for $\mathbf{K}^{n+1}$ and $\mathbf{L}^{n+1}$, reduced QR factorizations yield the orthonormal bases $\bfV^{x,n+1}_{\ddagger}$ and $\bfV^{y,n+1}_{\ddagger}$. Again, double dagger $\ddagger$ denotes the orthonormal bases obtained from the $\mathbf{K}$ and $\mathbf{L}$ equations. Repeating the reduced augmentation procedure on the augmented bases
\begin{subequations}
\begin{align}
\begin{split}
\Big[\ \bfV^{x,n+1}_{\ddagger}\ ,\ \bfV^{x,(1)}\ ,\ \bfV^{x,n}\ \Big],
\end{split}
\\
\begin{split}
\Big[\ \bfV^{y,n+1}_{\ddagger}\ ,\ \bfV^{y,(1)}\ ,\ \bfV^{y,n}\ \Big],
\end{split}
\end{align}
\end{subequations}
yields the pre-truncated bases $\hat{\bfV}^{\cdot,n+1}$ of rank $\hat{r}^{n+1}$. The Sylvester equation resulting from the Galerkin projection onto $\hat{\bfV}^{x,n+1}$ and $\hat{\bfV}^{y,n+1}$ is
\begin{align}\label{eq:Sstep_DIRK2.2}
\begin{split}
\Big(\mathbf{I} - \nu\Delta t(\hat{\bfV}^{x,n+1})^T\mathbf{F}_x\hat{\bfV}^{x,n+1}\Big)\hat{\mathbf{S}}^{n+1} - &\hat{\mathbf{S}}^{n+1}\Big(\nu\Delta t(\mathbf{F}_y\hat{\bfV}^{y,n+1})^T\hat{\bfV}^{y,n+1}\Big)\\
&\qquad\qquad= (\hat{\bfV}^{x,n+1})^T\mathbf{W}^{(1)}\hat{\bfV}^{y,n+1}.
\end{split}
\end{align}

With the coefficients $\hat{\bfS}^{(k)}$ computed, the truncated updated solution $\bfU^{n+1}=\bfV^{x,n+1}\bfS^{n+1}(\bfV^{y,n+1})^T$ can be obtained; see subsection \ref{sec:implicit_firstorderscheme}.


\subsection{High-order implicit schemes}

The extension to high-order DIRK schemes is presented and follows very similarly to the second-order extension in subsection \ref{sec:implicit_secondorderscheme}. RK schemes are commonly expressed by a Butcher tableau. In particular, a general $s$ stage DIRK method is expressed by the following Butcher tableau:
\begin{table}[h!]
\centering
\caption{Butcher tableau for an $s$ stage DIRK scheme}
\label{table:DIRK}
\begin{tabular}{c|llll}
    $c_1$&$a_{11}$&0&$\hdots$&0\\
    $c_2$&$a_{21}$&$a_{22}$&$\hdots$&0\\
    $\vdots$&$\vdots$&$\vdots$&$\ddots$&$\vdots$\\
    $c_s$&$a_{s1}$&$a_{s2}$&$\hdots$&$a_{ss}$\\
    \hline
    &$b_1$&$b_2$&$\hdots$&$b_s$
    \end{tabular}
\end{table}

In Table \ref{table:DIRK}, $c_k = \sum_{j=1}^{k}{a_{kj}}$ for $k=1,2,...,s$, and $\sum_{k=1}^{s}{b_k} = 1$ for consistency. Each row in the Butcher tableau represents an intermediate stage for the solution at time $t^{(k)}=t^n+c_k\Delta t$ for $1,2,...,s$. Referring back to the matrix differential equation \eqref{eq:MOL_implicit}, the intermediate equation at the $k$th stage is
\begin{equation}
	\label{eq:DIRK_k}
	\bfU^{(k)} = \bfU^{n} + \Delta t\sum\limits_{\ell=1}^{k}{a_{k\ell}\mathbf{Y}_{\ell}} \equiv \bfU^{n} + \Delta t\sum\limits_{\ell=1}^{k}{a_{k\ell}\Big(\bfF_{x} \bfU^{(\ell)} + \bfU^{(\ell)} \bfF_{y}^T\Big)},
\end{equation}
where $\bfU^{(k)}=\bfV^{x,(k)}\bfS^{(k)}(\bfV^{y,(k)})^T\approx\bfU(t^{(k)})$. With all the intermediate stages computed, the final solution is
\begin{equation}
	\label{eq:DIRK_nn}
	\bfU^{n+1} = \bfU^n + \Delta t\sum\limits_{k=1}^{s}{b_k\mathbf{Y}_k}.
\end{equation}

We restrict ourselves to stiffly accurate DIRK methods for which $c_s=1$ and $a_{sk}=b_k$ for $k=1,2,...,s$. As such, $\bfU^{(s)}=\bfU^{n+1}$ and we do not need to compute equation \eqref{eq:DIRK_nn}. Note that the second-order DIRK method in subsection \ref{sec:implicit_secondorderscheme} is stiffly accurate. Only equation \eqref{eq:DIRK_k} needs to be solved at each stage. This begs the question: what approximate bases $\bfV^{\cdot,(k)}_{\star}$ should we use in the projection of equation \eqref{eq:DIRK_k} at the $k$th stage? Since the RAIL framework discretizes in time before performing the projection, we have the flexibility to construct richer low-rank subspaces for each subsequent RK stage. Extending the idea from the second-order RAIL scheme, we propose enriching the basis as much as possible by spanning the bases computed at all the previous stages, as well as a first-order prediction at time $t^{(k)}$. Then, we can perform the KLS(+truncate) process at \textit{each} stage on equation \eqref{eq:DIRK_k}. For the K and L steps, we construct $\bfV^{\cdot,(k)}_{\star}$ by reducing the following augmented matrices:
\begin{subequations}\label{eq:aug}
\begin{align}
\begin{split}
\Big[\ \bfV^{x,(k)}_{\dagger}\ ,\ \bfV^{x,(k-1)}\ ,\ ...\ ,\ \bfV^{x,(1)}\ ,\ \bfV^{x,n}\ \Big],
\end{split}
\\
\begin{split}
\Big[\ \bfV^{y,(k)}_{\dagger}\ ,\ \bfV^{y,(k-1)}\ ,\ ...\ ,\ \bfV^{y,(1)}\ ,\ \bfV^{y,n}\ \Big],
\end{split}
\end{align}
\end{subequations}
where $\bfV^{\cdot,(k)}_{\dagger}$ denotes a first-order prediction at time $t^{(k)}$ computed using the first-order RAIL scheme. Same as the second-order RAIL scheme, $\bfV^{\cdot,(k)}_{\dagger}$ is only needed for $k>1$, i.e., not the first stage since it is a backward Euler discretization and we just use the first-order RAIL scheme to compute $\bfU^{(1)}$ in factorized form. Observe that the reduced augmentation maintains computational efficiency by reducing the rank from what would otherwise be roughly $(k+1)r^n$. Projecting the intermediate solution $\bfU^{(k)}$ onto $\bfV^{y,(k)}_{\star}$ or $\bfV^{x,(k)}_{\star}$ respectively defines
\begin{subequations}
\begin{align}
\begin{split}\label{eq:K_DIRK}
\bfK^{(k)} &\coloneqq \bfV^{x,(k)}\bfS^{(k)}(\bfV^{y,(k)})^T \bfV^{y,(k)}_{\star}\in\mathbb{R}^{N\times r^{(k)}_{\star}},
\end{split}
\\
\begin{split}\label{eq:L_DIRK}
\bfL^{(k)} &\coloneqq \bfV^{y,(k)}(\mathbf{S}^{(k)})^T(\bfV^{x,(k)})^T\bfV^{x,(k)}_{\star}\in\mathbb{R}^{N\times r^{(k)}_{\star}}.
\end{split}
\end{align}
\end{subequations}

Equation \eqref{eq:DIRK_k} can be rewritten as
\begin{equation}
	\label{eq:DIRK_k2}
	\bfU^{(k)} = \mathbf{W}^{(k-1)} + a_{kk}\Delta t\Big(\bfF_{x} \bfU^{(k)} + \bfU^{(k)} \bfF_{y}^T\Big),
\end{equation}
where
\begin{equation}
	\label{eq:DIRK_k_RHS}
	\mathbf{W}^{(k-1)} = \bfU^{n} + \Delta t\sum\limits_{\ell=1}^{k-1}{a_{k\ell}\mathbf{Y}_{\ell}}.
\end{equation}

Following the procedure outlined in subsection \ref{sec:implicit_secondorderscheme} yields the Sylvester equations
\begin{equation}\label{eq:Kstep_DIRK_k}
\Big(\mathbf{I}-a_{kk}\Delta t\mathbf{F}_x\Big)\mathbf{K}^{(k)} - \mathbf{K}^{(k)}\Big(a_{kk}\Delta t(\mathbf{F}_y\bfV^{y,(k)}_{\star})^T\bfV^{y,(k)}_{\star}\Big) = \mathbf{W}^{(k-1)}\bfV^{y,(k)}_{\star},
\end{equation}
\begin{equation}\label{eq:Lstep_DIRK_k}
\Big(\mathbf{I}-a_{kk}\Delta t\mathbf{F}_y\Big)\mathbf{L}^{(k)} - \mathbf{L}^{(k)}\Big(a_{kk}\Delta t(\mathbf{F}_x\bfV^{x,(k)}_{\star})^T\bfV^{x,(k)}_{\star}\Big) = (\mathbf{W}^{(k-1)})^T\bfV^{x,(k)}_{\star}.
\end{equation}

Solving equations \eqref{eq:Kstep_DIRK_k}-\eqref{eq:Lstep_DIRK_k} for $\mathbf{K}^{(k)}$ and $\mathbf{L}^{(k)}$ and computing their reduced QR factorizations yield the orthonormal bases, $\bfV^{x,(k)}_{\ddagger}$ and $\bfV^{y,(k)}_{\ddagger}$. Replacing $\bfV^{\cdot,(k)}_{\dagger}$ with $\bfV^{\cdot,(k)}_{\ddagger}$ in the augmented matrices \eqref{eq:aug}, we can reduce according to tolerance $10^{-12}$ to get the pre-truncated intermediate bases $\hat{\bfV}^{\cdot,(k)}$ of rank $\hat{r}^{(k)}$. The Sylvester equation for the S step is then
\begin{align}\label{eq:Sstep_DIRK_k}
\begin{split}
\Big(\mathbf{I} - a_{kk}\Delta t(\hat{\bfV}^{x,(k)})^T\mathbf{F}_x\hat{\bfV}^{x,(k)}\Big)\hat{\mathbf{S}}^{(k)} - &\hat{\mathbf{S}}^{(k)}\Big(a_{kk}\Delta t(\mathbf{F}_y\hat{\bfV}^{y,(k)})^T\hat{\bfV}^{y,(k)}\Big)\\
&\qquad\qquad= (\hat{\bfV}^{x,(k)})^T\mathbf{W}^{(k-1)}\hat{\bfV}^{y,(k)}.
\end{split}
\end{align}

Truncating the factorized solution using an SVD-type procedure according to tolerance $\epsilon$ yields $\bfU^{(k)}=\bfV^{x,(k)}\bfS^{(k)}(\bfV^{y,(k)})^T$. Repeating this process for each subsequent stage in the stiffly accurate DIRK method ultimately computes the updated solution $\mathbf{U}^{(s)}=\mathbf{U}^{n+1}=\bfV^{x,n+1}\bfS^{n+1}(\bfV^{y,n+1})^T$. Butcher tableaus of second- and third-order DIRK methods, as well as the algorithms for the reduced augmentation procedure and implicit RAIL scheme are included in Appendix \ref{app:algos}.

\begin{rem}\label{rem:computcost}
Assuming low-rank structure, that is $r\ll N$, the computational cost of each time-step is dominated by solving the Sylvester equations in the K and L steps; all other matrix decompositions and operations cost at most $\sim N^2r$ flops. The differential operator $\mathbf{I}-a_{kk}\Delta t\mathbf{F}_i$ is size $N\times N$, and $-a_{kk}\Delta t(\mathbf{F}_i\bfV_{\star})^T\bfV_{\star}$ is size $r\times r$. As such, standard Sylvester solvers, e.g., \cite{bartels1972,golub1979hessenberg}, will have a computational cost dominated by $\sim N^3$ flops from computing Householder reflectors for $\mathbf{I}-a_{kk}\Delta t\mathbf{F}_i$. Although not investigated in this paper, the efficiency of the RAIL scheme could be improved by reducing the computational cost of the Sylvester solver. One could diagonalize $\mathbf{I}-a_{kk}\Delta t\mathbf{F}_i$ and solve the transformed Sylvester equations with a reduced cost of $\sim N^2r$ flops; early preliminary results showed this speed-up \cite{nakao2023thesis}. However, this is only advantageous if the differential operator: (1) is not time-dependent, and (2) can be stably diagonalized. Alternatively, one could exploit the possible sparsity of $\mathbf{F}_i$ and use an iterative scheme for which each iteration is only $\sim Nr$ flops.
\end{rem}


\subsection{Analysis of the implicit scheme}\label{sec:analysis}

In this section, we analyze the proposed implicit scheme. Consistency is shown for the first-order RAIL scheme by equating the first-order scheme to the augmented BUG integrator \cite{Ceruti2022a}, and a stability analysis is also provided. The consistency of the high-order RAIL scheme is then discussed without rigorous derivation, but we note that high-order accuracy is observed in all numerical experiments conducted here. We also want to alert the reader of the recent high-order BUG integrators with proven second-order error bounds \cite{Ceruti2024,Kusch2024}; the projected Runge-Kutta methods in \cite{kieri2019projection} also have robust high-order error bounds.

\subsubsection{Consistency}

The proposed first-order implicit scheme is equivalent to the augmented BUG integrator coupled with backward Euler time discretization, with only a couple minor algorithmic differences that do not affect the consistency of the scheme. We do not review the augmented BUG integrator here and refer the reader to Section 3.1 of the original paper \cite{Ceruti2022a}. Namely, the two differences are:
\begin{enumerate}
	\item The augmentation of the updated and current bases is truncated according to a small tolerance of $10^{-12}$. Whereas, the augmented BUG integrator does not have this reduction.
	\item The solution is truncated by first computing the SVD $\hat{\bfS}^{n+1}=\mathbf{A}\mathbf{\Sigma}\mathbf{B}^T$ with $\mathbf{\Sigma}=\text{diag}(\sigma_j)$. The singular values are then truncated according to tolerance $\epsilon$ by choosing the new rank $r^{n+1}$ as the minimal number $r^{n+1}$ such that $\sigma_{r^{n+1}+1}\leq\epsilon$. Whereas, the augmented BUG integrator chooses the new rank $r^{n+1}$ as the minimal number $r^{n+1}$ such that $\sqrt{\sum\limits_{j=r^{n+1}+1}^{2r^n}{\sigma_j^2}}\leq\epsilon$. Simply put, we propose truncating $\hat{\mathbf{S}}^{n+1}$ with respect to the 2-norm; the augmented BUG integrator truncates $\hat{\bfS}^{n+1}$ with respect to the Frobenius norm.
\end{enumerate}

Ceruti et al. \cite{Ceruti2022a} rigorously derived a global error bound for the augmented BUG integrator that is robust to small singular values; the convergence result obviously implies consistency. The proof of which follows the same lines as the proof of the robust error bound shown for the fixed-rank BUG integrator in \cite{Ceruti2022}. Since the analysis done for the BUG integrators does not take round-off errors into account, the two differences mentioned above effectively do not change the overall consistency of the scheme. To be more precise, the reduction of the augmentation with respect to tolerance $1.0e-12$ will add another term to the local error bound on the order of this tolerance, but numerically this contribution is close to machine precision. Hence, the first-order RAIL scheme shares the same consistency result as the augmented BUG integrator coupled with backward Euler. We state the local error bound of the augmented BUG integrator as Theorem \ref{thm:aBUG} but omit the proof.

\begin{thm}[see Theorem 2 in \cite{Ceruti2022a}, and Lemma 4 in \cite{Ceruti2022}]\label{thm:aBUG}
Let $\mathbf{U}(t)$ denote the solution to the matrix differential equation \eqref{eq:MOL_implicit} with initial condition $\mathbf{U}(t^0) = \mathbf{U}^0_e$. Let $\mathbf{F}(t,\mathbf{U}(t))=\mathbf{F}_x\mathbf{U}+\mathbf{U}\mathbf{F}_y^T$. Assume the following conditions hold in the Frobenius norm $\norm{\cdot}=\norm{\cdot}_F$:
\begin{enumerate}
    \item $\mathbf{F}$ is Lipschitz-continuous and bounded: for all $\mathbf{Y},\widetilde{\mathbf{Y}}\in\mathbb{R}^{N\times N}$ and $0\leq t\leq t^1$,
    \begin{equation}
        \norm{\mathbf{F}(t,\mathbf{Y})-\mathbf{F}(t,\widetilde{\mathbf{Y}})}\leq L\norm{\mathbf{Y}-\widetilde{\mathbf{Y}}},\qquad\norm{\mathbf{F}(t,\mathbf{Y})}\leq B.
    \end{equation}
    \item The normal part of $\mathbf{F}(t,\mathbf{Y})$ is $\varepsilon$-small at rank $r^1$ for $\mathbf{Y}$ near $\mathbf{U}(t^1)$ and $t$ near $t^1$. That is, with $\mathbf{P}_{r^1}(\mathbf{Y})$ denoting the orthogonal projection onto the tangent space of the manifold $\mathcal{M}_{r^1}$ of rank-$r^1$ matrices at $\mathbf{Y}\in\mathcal{M}_{r^1}$, it is assumed that
    \begin{equation}
        \norm{(\mathbf{I} - \mathbf{P}_{r^1}(\mathbf{Y}))\mathbf{F}(t,\mathbf{Y})}\leq \varepsilon
    \end{equation}
for all $\mathbf{Y}\in\mathcal{M}_{r^1}$ in a neighborhood of $\mathbf{U}(t^1)$ and $t$ near $t^1$.
\end{enumerate}
Let $\mathbf{U}^1$ denote the low-rank approximation to $\mathbf{U}(t^1)$ obtained after one step of the rank-adaptive integrator with step-size $\Delta t>0$. If $\bfU^0=\bfU^0_e$, then the following local error bound holds:
\begin{equation}
    \norm{\mathbf{U}^1 - \mathbf{U}(t^1)}\leq \Delta t(c_1\varepsilon + c_2\Delta t) + c_3\epsilon,
\end{equation}
where the constants $c_i$ only depend on $L$, $B$, and a bound of the step-size. Here, $\epsilon$ is still the tolerance with which we truncate the updated solution. In particular, the constants are independent of singular values of the exact or approximate solution.
\end{thm}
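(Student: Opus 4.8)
The plan is to bound the local error by three contributions matching the three terms of the stated bound: a modeling term $\sim\Dt\varepsilon$ coming from the smallness of the normal part of $\mathbf{F}$, a time-discretization term $\sim\Dt^2$ coming from backward Euler, and a truncation term $\sim\epsilon$ coming from the final SVD step. Writing $\hat{\mathbf{U}}^1=\hat{\mathbf{V}}^{x,1}\hat{\mathbf{S}}^1(\hat{\mathbf{V}}^{y,1})^T$ for the pre-truncation output and $\mathbf{U}^1$ for the truncated one, I would first peel off the truncation error, $\norm{\mathbf{U}^1-\hat{\mathbf{U}}^1}\le c_3\epsilon$ directly from the Frobenius-norm SVD truncation criterion, and then concentrate entirely on bounding $\norm{\hat{\mathbf{U}}^1-\mathbf{U}(t^1)}$. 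For the latter I would use the orthogonal decomposition, with $\hat{\mathbf{P}}^x=\hat{\mathbf{V}}^{x,1}(\hat{\mathbf{V}}^{x,1})^T$ and $\hat{\mathbf{P}}^y=\hat{\mathbf{V}}^{y,1}(\hat{\mathbf{V}}^{y,1})^T$, given by $\mathbf{U}(t^1)-\hat{\mathbf{U}}^1=(\mathbf{I}-\hat{\mathbf{P}}^x)\mathbf{U}(t^1)+\hat{\mathbf{P}}^x\mathbf{U}(t^1)(\mathbf{I}-\hat{\mathbf{P}}^y)+\hat{\mathbf{P}}^x[\mathbf{U}(t^1)-\hat{\mathbf{U}}^1]\hat{\mathbf{P}}^y$, which splits the error into two range-capture pieces and one in-subspace piece.

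The core step is to show that $\mathrm{range}(\hat{\mathbf{V}}^{x,1})$ and $\mathrm{range}(\hat{\mathbf{V}}^{y,1})$ capture the column and row spaces of $\mathbf{U}(t^1)$ up to $O(\Dt\varepsilon+\Dt^2)$, with constants independent of the singular values of $\mathbf{S}$; this controls the first two pieces of the decomposition above. I would bound $\norm{(\mathbf{I}-\hat{\mathbf{P}}^x)\mathbf{U}(t^1)}$ and $\norm{\mathbf{U}(t^1)(\mathbf{I}-\hat{\mathbf{P}}^y)}$ using the K and L equations \eqref{eq:Kstep3_bE}--\eqref{eq:Lstep3_bE}. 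The key observation is that these are linear (Sylvester) equations that never invert $\mathbf{S}$, so solving them and orthonormalizing $\mathbf{K}^1,\mathbf{L}^1$ to obtain $\mathbf{V}^{x,1}_{\ddagger},\mathbf{V}^{y,1}_{\ddagger}$ produces subspaces that track the tangential evolution of the range with an error governed only by the normal defect (Assumption 2, $\le\varepsilon$) and the backward-Euler consistency error, not by $1/\sigma_{\min}$. Augmenting with the old factors $\mathbf{V}^{x,0},\mathbf{V}^{y,0}$ only enlarges these subspaces, so the same bounds persist for $\hat{\mathbf{V}}^{\cdot,1}$; moreover the inclusion $\mathrm{range}(\mathbf{V}^{\cdot,0})\subseteq\mathrm{range}(\hat{\mathbf{V}}^{\cdot,1})$ yields the exactness property $\hat{\mathbf{P}}^x\mathbf{U}^0=\mathbf{U}^0=\mathbf{U}^0\hat{\mathbf{P}}^y$, which removes the leading-order contribution. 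For the in-subspace piece I would verify that the S-step \eqref{eq:Sstep_bE} is exactly the Galerkin projection of the backward-Euler equation \eqref{eq:bEuler}, so that $\hat{\mathbf{U}}^1=\hat{\mathbf{P}}^x[\mathbf{U}^0+\Dt\,\mathbf{F}(t^1,\hat{\mathbf{U}}^1)]\hat{\mathbf{P}}^y$; comparing this implicit solve against $\hat{\mathbf{P}}^x\mathbf{U}(t^1)\hat{\mathbf{P}}^y$ and using the boundedness and Lipschitz bounds of Assumption 1 controls $\norm{\hat{\mathbf{P}}^x[\mathbf{U}(t^1)-\hat{\mathbf{U}}^1]\hat{\mathbf{P}}^y}$ by the backward-Euler local truncation error $\sim\Dt^2$. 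Assembling the three pieces via the triangle inequality then gives the claimed bound.

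The hard part throughout is robustness, i.e.\ ensuring the constants $c_1,c_2$ do not degrade as the smallest singular value of $\mathbf{S}$ tends to zero. A naive argument that compares $\hat{\mathbf{U}}^1$ to the solution of the tangent-projected flow $\dot{\mathbf{Y}}=\mathbf{P}_{r^1}(\mathbf{Y})\mathbf{F}(t,\mathbf{Y})$ would incur the Lipschitz constant of the map $\mathbf{Y}\mapsto\mathbf{P}_{r^1}(\mathbf{Y})$, which scales like $1/\sigma_{\min}$ through the curvature of $\mathcal{M}_{r^1}$ and blows up for small singular values. The mechanism that removes this dependence—and the crux of the argument—is that the K, L, and S steps are posed as linear matrix equations that avoid $\mathbf{S}^{-1}$, combined with the augmentation that guarantees $\mathbf{U}^0$ lies in the new ranges; the defect can then be estimated directly at the matrix level with Assumption 2 supplying the singular-value-uniform bound $\varepsilon$ on the normal component. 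This is precisely the route of Lemma 4 in \cite{Ceruti2022} and Theorem 2 in \cite{Ceruti2022a}, which I would follow to fill in the remaining constants, and the RAIL-specific reduced augmentation at tolerance $10^{-12}$ only perturbs the local bound by a term of that size, which is absorbed into the $c_3\epsilon$ contribution and is negligible in practice.
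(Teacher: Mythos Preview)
Your proposal is correct and follows precisely the route of the cited references: the paper does not give its own proof of this theorem but explicitly states it as a restatement of Theorem~2 in \cite{Ceruti2022a} (with Lemma~4 of \cite{Ceruti2022}) and omits the proof, noting only that the reduced-augmentation tolerance $10^{-12}$ adds a negligible extra term. Your sketch---peeling off the SVD truncation, decomposing via the range projectors $\hat{\mathbf{P}}^x,\hat{\mathbf{P}}^y$, using the K/L Sylvester solves and the augmentation to obtain singular-value--independent range-capture bounds, and handling the in-subspace piece as a Galerkin backward-Euler error---is exactly the argument of those references, so there is nothing to compare.
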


A rigorous derivation of a local error bound for the high-order RAIL scheme is nontrivial and the topic of future work. We comment on the intuition behind why (and under which assumptions) we expect a high-order robust error bound to exist, but high-order accuracy is only verified in our numerical experiments conducted here. First refer to the Butcher tableau of the second-order DIRK method of interest (see Appendix \ref{app:RK}). Since the first stage is a backward Euler approximation at time $t^{(1)}=t^n+\nu\Delta t$, using the first-order RAIL scheme to obtain $\mathbf{U}^{(1)}$ provides a first-order local truncation error. The idea behind the reduced augmentation is that by including the bases computed at all the previous RK stages in the augmentation, the space we project onto is as rich as possible to allow for the high-order accuracy obtained in the final stage of the RK method in numerical observations. In other words, by including the bases from all previous RK stages, we generate an approximation space as large as possible to allow for higher-order accurate methods. Yet, this likely requires assumptions on the rate of change of the solution due to $\mathbf{F}(t,\mathbf{U}(t))=\mathbf{F}_x\mathbf{U}+\mathbf{U}\mathbf{F}_y^T$ similar to the two assumptions in Theorem \ref{thm:aBUG}. Third- and higher-order DIRK methods are more intricate since the consistency of the method requires balancing many more stages.

\begin{rem}
The augmented bases \eqref{eq:aug} include a first-order approximation at time $t^{(k)}$ computed using the first-order RAIL scheme. To reiterate, the main reason for including this in the augmentation is to include a prediction of the basis at time $t^{(k)}$. Whereas, the reason for including the bases from the previous Runge-Kutta stages is to create a richer space that still enjoys the consistency of the Runge-Kutta method.
\end{rem}

\subsubsection{Stability}

\begin{thm}
Assuming $\mathbf{F}_{x}$ and $\mathbf{F}_{y}$ are symmetric and negative semi-definite, the first-order RAIL scheme using backward Euler is unconditionally stable in $L^2$.
\end{thm}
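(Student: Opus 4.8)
The plan is to track how the discrete $L^2$ norm---identified with the Frobenius norm $\norm{\cdot}_F$ on the tensor-product grid \eqref{eq: 2dgrid}---propagates through the scheme, and to observe that stability is controlled entirely by the final S step together with two norm-non-increasing truncations. The K and L steps serve only to generate the orthonormal projection bases $\hat{\bfV}^{x,n+1}$ and $\hat{\bfV}^{y,n+1}$; since these enter the update solely through the Galerkin projection, their precise construction is irrelevant to the estimate. I would therefore reduce the claim to $\norm{\bfU^{n+1}}_F\leq\norm{\bfU^n}_F$ and immediately peel off the final step: because the SVD-type truncation only discards singular values and $\hat{\bfV}^{x,n+1},\hat{\bfV}^{y,n+1}$ have orthonormal columns, $\norm{\bfU^{n+1}}_F\leq\norm{\hat{\bfV}^{x,n+1}\hat{\bfS}^{n+1}(\hat{\bfV}^{y,n+1})^T}_F=\norm{\hat{\bfS}^{n+1}}_F$.

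Next I would rewrite the S-step Sylvester equation \eqref{eq:Sstep_bE} in terms of the projected operators $\tilde{\bfF}_x\coloneqq(\hat{\bfV}^{x,n+1})^T\mathbf{F}_x\hat{\bfV}^{x,n+1}$ and $\tilde{\bfF}_y\coloneqq(\hat{\bfV}^{y,n+1})^T\mathbf{F}_y\hat{\bfV}^{y,n+1}$. The symmetry of $\mathbf{F}_x,\mathbf{F}_y$ and orthonormality of the bases make these symmetric, and for any vector $z$ one has $z^T\tilde{\bfF}_x z=(\hat{\bfV}^{x,n+1}z)^T\mathbf{F}_x(\hat{\bfV}^{x,n+1}z)\leq 0$, so negative semi-definiteness is inherited; note also $(\mathbf{F}_y\hat{\bfV}^{y,n+1})^T\hat{\bfV}^{y,n+1}=\tilde{\bfF}_y$ by symmetry. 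Thus \eqref{eq:Sstep_bE} reads $(\mathbf{I}-\Dt\,\tilde{\bfF}_x)\hat{\bfS}^{n+1}-\hat{\bfS}^{n+1}(\Dt\,\tilde{\bfF}_y)=\hat{\mathbf{C}}$, where $\hat{\mathbf{C}}\coloneqq(\hat{\bfV}^{x,n+1})^T\bfU^n\hat{\bfV}^{y,n+1}$ is the double projection of $\bfU^n$.

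The crux is to show this solve is norm-non-increasing, $\norm{\hat{\bfS}^{n+1}}_F\leq\norm{\hat{\mathbf{C}}}_F$. I would diagonalize $\tilde{\bfF}_x=\mathbf{P}_x\boldsymbol{\Lambda}_x\mathbf{P}_x^T$ and $\tilde{\bfF}_y=\mathbf{P}_y\boldsymbol{\Lambda}_y\mathbf{P}_y^T$ with orthogonal $\mathbf{P}_x,\mathbf{P}_y$ and nonpositive eigenvalues $\lambda_i^x,\lambda_j^y\leq 0$ (possible \emph{precisely} because both projected operators are symmetric). Setting $\tilde{\bfS}=\mathbf{P}_x^T\hat{\bfS}^{n+1}\mathbf{P}_y$ and $\tilde{\mathbf{C}}=\mathbf{P}_x^T\hat{\mathbf{C}}\mathbf{P}_y$, the equation decouples entrywise into $(1-\Dt\,\lambda_i^x-\Dt\,\lambda_j^y)\tilde{S}_{ij}=\tilde{C}_{ij}$. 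Since $\Dt>0$ and $\lambda_i^x,\lambda_j^y\leq 0$, every denominator satisfies $1-\Dt\,\lambda_i^x-\Dt\,\lambda_j^y\geq 1$, so $|\tilde{S}_{ij}|\leq|\tilde{C}_{ij}|$ and hence $\norm{\tilde{\bfS}}_F\leq\norm{\tilde{\mathbf{C}}}_F$; orthogonal invariance of the Frobenius norm then gives $\norm{\hat{\bfS}^{n+1}}_F\leq\norm{\hat{\mathbf{C}}}_F$. Finally, because $\hat{\bfV}^{x,n+1},\hat{\bfV}^{y,n+1}$ have orthonormal columns, left and right multiplication by $(\hat{\bfV}^{x,n+1})^T$ and $\hat{\bfV}^{y,n+1}$ cannot increase the Frobenius norm (the associated Gram matrices are orthogonal projections, hence $\preceq\mathbf{I}$), so $\norm{\hat{\mathbf{C}}}_F\leq\norm{\bfU^n}_F$. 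Chaining the three bounds yields $\norm{\bfU^{n+1}}_F\leq\norm{\bfU^n}_F$ for every $\Dt>0$, i.e. unconditional $L^2$ stability.

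I would flag two points as requiring care rather than being genuine obstacles. The first, which is where the hypothesis does all the work, is confirming that the projected operators remain symmetric and negative semi-definite so that the Sylvester equation diagonalizes cleanly; without symmetry one would instead have to argue through the field of values of $\mathbf{I}-\Dt\,\tilde{\bfF}_x$, which is considerably messier. The second is the identification of the discrete $L^2$ norm with the (possibly grid-weighted) Frobenius norm; for the uniform tensor-product spectral grid \eqref{eq: 2dgrid} the weights are constant and factor out, so they do not alter any of the inequalities above.
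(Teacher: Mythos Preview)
Your proof is correct and follows essentially the same route as the paper: both argue that the K/L steps are irrelevant to stability, diagonalize the symmetric negative semi-definite projected operators in the S-step Sylvester equation to obtain entrywise contraction factors $\geq 1$, and then use that orthogonal projection onto $\hat{\bfV}^{x,n+1},\hat{\bfV}^{y,n+1}$ does not increase the Frobenius norm. Your version is slightly more explicit in separating out the final SVD truncation as an additional norm-non-increasing step and in flagging the inheritance of symmetry and semi-definiteness, but the core argument is the same.
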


\begin{proof}
The K and L steps only matter insomuch as providing a good approximation for the sake of consistency and accuracy. They do not affect the stability since the norm of the solution diminishes purely due to the implicit S step. Recalling equation \eqref{eq:Sstep_bE} and dropping the hats for notational ease,
\begin{align*}
    \Big(\mathbf{I} - \Delta t(\bfV^{x,n+1})^T\mathbf{F}_{x}\bfV^{x,n+1}\Big)\bfS^{n+1} - &\bfS^{n+1}\Big(\Delta t(\mathbf{F}_{y}\bfV^{y,n+1})^T\bfV^{y,n+1}\Big)\\
&\qquad\qquad= (\bfV^{x,n+1})^T\bfV^{x,n}\bfS^{n}(\bfV^{y,n})^T\bfV^{y,n+1}. 
\end{align*}
Letting $\mathbf{A}=\Delta t(\bfV^{x,n+1})^T\mathbf{F}_{x}\bfV^{x,n+1}$, $\mathbf{B}=\Delta t(\mathbf{F}_{y}\bfV^{y,n+1})^T\bfV^{y,n+1}$ and\\
$\mathbf{C} = (\bfV^{x,n+1})^T\bfV^{x,n}\bfS^{n}(\bfV^{y,n})^T\bfV^{y,n+1}$, we rewrite equation \eqref{eq:Sstep_bE} as
\begin{equation*}
    (\mathbf{I}-\mathbf{A})\mathbf{S}^{n+1}-\mathbf{S}^{n+1}\mathbf{B} = \mathbf{C}.
\end{equation*}
Note that $\mathbf{A}$ and $\mathbf{B}$ are symmetric if $\mathbf{F}_{x}$ and $\mathbf{F}_{y}$ are symmetric. Performing the diagonalizations $\mathbf{D}=\mathbf{P}^T\mathbf{A}\mathbf{P}$ and $\mathbf{E}=\mathbf{Q}^T\mathbf{B}\mathbf{Q}$, we have
\begin{equation*}
    \mathbf{P}^T(\mathbf{I}-\mathbf{A})\mathbf{P}\mathbf{P}^T\mathbf{S}^{n+1}\mathbf{Q}-\mathbf{P}^T\mathbf{S}^{n+1}\mathbf{Q}\mathbf{Q}^T\mathbf{A}\mathbf{Q}=\mathbf{P}^T\mathbf{C}\mathbf{Q},
\end{equation*}
or rather,
\begin{equation*}
    (\mathbf{I}-\mathbf{D})\mathbf{P}^T\mathbf{S}^{n+1}\mathbf{Q}-\mathbf{P}^T\mathbf{S}^{n+1}\mathbf{Q}\mathbf{E}=\mathbf{P}^T\mathbf{C}\mathbf{Q}.
\end{equation*}
Defining $\check{\mathbf{S}}^{n+1}=\mathbf{P}^T\mathbf{S}^{n+1}\mathbf{Q}$ and $\check{\mathbf{C}}=\mathbf{P}^T\mathbf{C}\mathbf{Q}$, we have
\begin{equation*}
    (1-D_{ii})\check{S}_{ij}^{n+1} - E_{jj}\check{S}_{ij}^{n+1}=\check{C}_{ij},
\end{equation*}
or rather,
\begin{equation*}
    \check{S}_{ij}^{n+1} = \frac{1}{1-D_{ii}-E_{jj}}\check{C}_{ij}.
\end{equation*}
Define the matrix $\mathbf{H}=(\mathbf{V}^{x,n+1})^T\mathbf{F}_{x}\mathbf{V}^{x,n+1}$. Notice that $\mathbf{H}$ is symmetric and negative semi-definite under the assumption that $\mathbf{F}_{x}$ is symmetric and negative semi-definite since
\begin{equation*}
    \mathbf{x}^T\mathbf{H}\mathbf{x} = (\mathbf{V}^{x,n+1}\mathbf{x})^T\mathbf{F}_{x}(\mathbf{V}^{x,n+1}\mathbf{x}) = \mathbf{y}^T\mathbf{F}_{x}\mathbf{y}\leq 0,\qquad\text{for all $\mathbf{x}$}.
\end{equation*}
Thus, $D_{ii}\leq 0$ and $E_{jj}\leq 0$, for all $i$ and $j$. Hence,
\begin{equation*}
    \frac{1}{1-D_{ii}-E_{jj}}\leq 1,\qquad\text{for all $i$ and $j$.}
\end{equation*}
It follows in the vector 2 norm (or the Frobenius norm of the matrix) that
\begin{align*}
    \norm{\mathbf{U}^{n+1}} &= \norm{\mathbf{S}^{n+1}} = \norm{\check{\mathbf{S}}^{n+1}} = \norm{\left(\frac{1}{1-D_{ii}-E_{jj}}\check{C}_{ij}^n\right)_{ij}}\\
    &\leq \norm{\check{\mathbf{C}}^n} = \norm{\mathbf{C}} = \norm{(\mathbf{V}^{x,n+1})^T\mathbf{U}^{n}\mathbf{V}^{y,n+1}} = \norm{\mathbf{P}_{\mathbf{V}^{x,n+1}}\mathbf{P}_{\mathbf{V}^{y,n+1}}\text{vec}(\mathbf{U}^n)}\\
&\leq \norm{\mathbf{U}^{n}}
\end{align*}
which is the desired result of unconditional stability in $L^2$. Note that we let $\mathbf{P}_{\mathbf{X}}$ denote the orthogonal $L^2$ projector onto the subspace spanned by the columns of $\mathbf{X}$ which reduces the norm.
\end{proof}

\section{Extension to implicit-explicit (IMEX) time discretizations}
\label{sec:RAIL_IMEX}

We extend the RAIL integrator to implicit-explicit (IMEX) time discretizations in which nonstiff operators are evolved explicitly and stiff operators are evolved implicitly. IMEX Runge-Kutta schemes from \cite{Ascher1997} are used in this paper. As will be shown, the only algorithmic change is that $\mathbf{W}^{(k-1)}$ in equation \eqref{eq:DIRK_k2} will now include the explicitly evolved terms from the previous stages. Following the same notation as \cite{Ascher1997}, coupling an $s$ stage DIRK scheme with a $\sigma=s+1$ stage explicit Runge-Kutta scheme, with combined order $p$, is denoted as IMEX$(s,\sigma,p)$. These IMEX$(s,\sigma,p)$ schemes are expressed by two Butchers tableaus, one for each of the implicit and explicit methods as seen in Tables \ref{table:IMEX_implicit} and \ref{table:IMEX_explicit}.

\begin{table}[h!]
\begin{minipage}[b]{0.49\linewidth}
\centering
\caption{Implicit Scheme}
\label{table:IMEX_implicit}
\begin{tabular}{c|lllll}
    0&0&0&0&0&0\\
    $c_1$&0&$a_{11}$&0&$\hdots$&0\\
    $c_2$&0&$a_{21}$&$a_{22}$&$\hdots$&0\\
    $\vdots$&$\vdots$&$\vdots$&$\vdots$&$\ddots$&$\vdots$\\
    $c_s$&0&$a_{s1}$&$a_{s2}$&$\hdots$&$a_{ss}$\\
    \hline
    &0&$b_1$&$b_2$&$\hdots$&$b_s$
    \end{tabular}
\end{minipage}
\begin{minipage}[b]{0.49\linewidth}
\centering
\caption{Explicit Scheme}
\label{table:IMEX_explicit}
\begin{tabular}{c|lllll}
    0&0&0&0&$\hdots$&0\\
    $c_1$&$\tilde{a}_{21}$&0&0&$\hdots$&0\\
    $c_2$&$\tilde{a}_{31}$&$\tilde{a}_{32}$&0&$\hdots$&0\\
    $\vdots$&$\vdots$&$\vdots$&$\vdots$&$\ddots$&$\vdots$\\
    $c_s$&$\tilde{a}_{\sigma 1}$&$\tilde{a}_{\sigma 2}$&$\tilde{a}_{\sigma 3}$&$\hdots$&0\\
    \hline
    &$\tilde{b}_1$&$\tilde{b}_2$&$\tilde{b}_3$&$\hdots$&$\tilde{b}_{\sigma}$
    \end{tabular}
\end{minipage}
\end{table}

In Table \ref{table:IMEX_implicit}, the implicit Butcher tableau is padded with a column and row of zeros to cast the DIRK method as an $s+1$ stage scheme to help couple it with the $s+1$ stage explicit scheme. Referring back to the matrix differential equation \eqref{eq:MOL} and letting $\textbf{Im}(t,\bfU)=\mathbf{F}_x\mathbf{U}+\mathbf{U}\mathbf{F}_y^T$ as in equation \eqref{eq:MOL_implicit} for the implicit scheme, the intermediate equation at the $k$th stage is
\begin{align}
\begin{split}
	\label{eq:IMEX_k}
	\bfU^{(k)} &= \bfU^{n} + \Delta t\sum\limits_{\ell=1}^{k}{a_{k\ell}\Big(\mathbf{Y}_{\ell}+\mathbf{\Phi}(t^{(\ell)})\Big)} + \Delta t\sum\limits_{\ell=1}^{k}{\tilde{a}_{k+1,\ell}\tilde{\mathbf{Y}}_{\ell}}\\
	&\equiv \bfU^{n} + \Delta t\sum\limits_{\ell=1}^{k}{a_{k\ell}\Big(\bfF_{x} \bfU^{(\ell)} + \bfU^{(\ell)} \bfF_{y}^T\Big)} +  \Delta t\sum\limits_{\ell=1}^{k}{a_{k\ell}\mathbf{\Phi}(t^{(\ell)})}\\
	&\quad+ \Delta t\sum\limits_{\ell=1}^{k}{\tilde{a}_{k+1,\ell}\textbf{Ex}(t^{(\ell-1)},\bfU^{(\ell-1)})},
\end{split}
\end{align}
where the source term $\mathbf{\Phi}$ is evolved implicitly; the source term could have been evolved explicitly. With all the intermediate stages computed, the final solution is
\begin{equation}
	\label{eq:IMEX_nn}
	\bfU^{n+1} = \bfU^n + \Delta t\sum\limits_{k=1}^{s}{b_k\Big(\mathbf{Y}_k + \mathbf{\Phi}(t^{(k)})\Big)} + \Delta t\sum\limits_{k=1}^{\sigma}{\tilde{b}_k\tilde{\mathbf{Y}}_k}.
\end{equation}

As in the implicit RAIL scheme, we assume stiffly accurate DIRK methods and do not need to compute equation \eqref{eq:IMEX_nn}. Rewriting equation \eqref{eq:IMEX_k} in a more convenient form similar to equation \eqref{eq:DIRK_k2},
\begin{equation}
	\label{eq:IMEX_k2}
	\bfU^{(k)} = \mathbf{W}^{(k-1)} + a_{kk}\Delta t\Big(\bfF_{x} \bfU^{(k)} + \bfU^{(k)} \bfF_{y}^T\Big),
\end{equation}
where
\begin{equation}
	\mathbf{W}^{(k-1)} = \bfU^{n} + \Delta t\sum\limits_{\ell=1}^{k-1}{a_{k\ell}\mathbf{Y}_{\ell}} + \Delta t\sum\limits_{\ell=1}^{k}{a_{k\ell}\mathbf{\Phi}(t^{(\ell)})} + \Delta t\sum\limits_{\ell=1}^{k}{\tilde{a}_{k+1,\ell}\tilde{\mathbf{Y}}_{\ell}}.
\end{equation}

The RAIL algorithm using implicit-explicit time discretizations is practically identical to that using implicit time discretizations. The only changes are $\mathbf{W}^{(k-1)}$ now includes the non-stiff terms at the previous stages, and the approximation $\bfV^{\cdot,(k)}_{\dagger}$ is the first-order solution at time $t^{(k)}$ using IMEX(1,1,1) which is simply a backward Euler-forward Euler discretization.

An error analysis of the RAIL integrator using IMEX time discretizations is more complicated since further assumptions might need to be made on the rate of change of $\mathbf{U}$ due to non-stiff operators. This is a topic of future work. We found that the proposed scheme still observed high-order accuracy in our numerical experiments included in this paper.

\begin{rem}\label{rem:CFLcondition}
The allowable time-stepping sizes for the RAIL scheme using IMEX discretizations obey a CFL condition since an explicit Runge-Kutta scheme is used. If the non-stiff term is smaller or similar in magnitude relative to the stiff terms, numerical instabilities might not appear until larger times. However, to enforce numerical stability when solving advection-diffusion problems with implicit-explicit schemes, the time-stepping size must be upper-bounded by a constant dependent on the ratio of the diffusion and the square of the advection coefficients \cite{wang2015stability}. Although the authors in \cite{wang2015stability} provide upper bounds in the discontinuous Galerkin framework, we observed similar limitations in our numerical tests.
\end{rem}

\begin{rem}\label{rem:construn}
When working with models such as equation \eqref{eq:adv_diff}, conservation is often desired at the discrete level. It is well known that SVD truncation leads to loss of conservation, thus motivating the need for a conservative truncation procedure in the low-rank framework. We note that even when using a standard SVD truncation, we still observed mass conservation on the order of the tolerance $\epsilon$. In our numerical experiments, we utilize the globally mass conservative truncation procedure in \cite{guo2024conservative} inspired by the work in \cite{Einkemmer2021a}. Broadly speaking, we project the solution onto the subspace that preserves the zeroth moment (i.e., mass). This is an orthogonal projection, and the part of the solution in the orthogonal complement contains zero mass and can therefore be truncated without destroying conservation. Although we only focus on conserving mass/number density, higher moments can be similarly conserved with the Local Macroscopic Conservative (LoMaC) truncation procedure in \cite{guo2022local}. This globally mass conservative truncation procedure requires scaling the solution/distribution function by a separable weight function, which we denote by $w(x,y)=w_1(x)w_2(y)$. Furthermore, we note that the final truncated bases must be orthonormalized for our scheme since we assume orthonormal bases. We include the algorithm for the globally mass conservative truncation procedure in the Appendix \ref{app:algos}.
\end{rem}

\section{Numerical tests}
\label{sec:numerics}

In this section, we present results applying the RAIL algorithm to various benchmark problems. Error plots ($L^1$) demonstrate the high-order accuracy in time, rank plots show that the scheme captures low-rank structures of solutions, and relative mass/number density plots show good global conservation. We assume a uniform mesh in space with $N$ gridpoints in each dimension. The spatial derivatives are discretized using spectral collocation methods (assuming periodic boundary conditions); these differentiation matrices can be found in \cite{trefethen2000spectral}. Other differentiation matrices could be used for different boundary conditions. Since the error from the spatial discretization is spectrally accurate, the temporal error will dominate. The singular value tolerance $\epsilon$ is set between $10^{-8}$ and $10^{-6}$, and we define the time-stepping size to be $\Delta t=\lambda\Delta x$ for varying $\lambda>0$. The first $r^0\ll N$ singular vectors/values of the SVD of the initial condition initialize the scheme. Depending on how much we expect the rank to instantaneously increase, the initial rank $r^0$ needs to be large enough to provide a rich enough space to project onto during the first time-step. Lastly, unless otherwise stated, we assume the weight function used to scale the solution in the globally mass conservative truncation procedure is $\mathbf{w}=\mathbf{1}$.


\subsection{The diffusion equation}

\begin{equation}\label{eq:test_diffusion}
    u_t = d_1u_{xx} + d_2u_{yy},\qquad x,y\in(0,14)
\end{equation}
with anisotropic diffusion coefficients $d_1=1/4$ and $d_2=1/9$. We consider the rank-two initial condition
\begin{equation}\label{eq:diffusion_IC}
	u(x,y,t=0) = 0.8\text{exp}(-15((x-6.5)^2+(y-6.5)^2)) + 0.5\text{exp}(-15((x-7.5)^2+(y-7)^2)).
\end{equation}
The domain is made large enough so that the solution's smoothness at the boundary is sufficient to use spectrally accurate differentiation matrices assuming periodic boundary conditions. Figure \ref{fig:tests1and2} shows the error when using the first-, second- and third-order RAIL schemes with backward Euler, DIRK2 and DIRK3, respectively. We used a mesh $N=200$, tolerance $\epsilon=1.0E-08$, final time $T_f=0.5$, $\lambda$ varying from 0.1 to 6, and initial rank $r^0=20$. A full-rank reference solution was computed using a mesh $N=600$, time-stepping size $\Delta t=0.05\Delta x$, and DIRK3. As seen in Figure \ref{fig:tests1and2}, we observe the expected accuracies.

\begin{figure}[t!]
\begin{minipage}[b!]{0.32\linewidth}
	\centering
	\includegraphics[width=\textwidth]{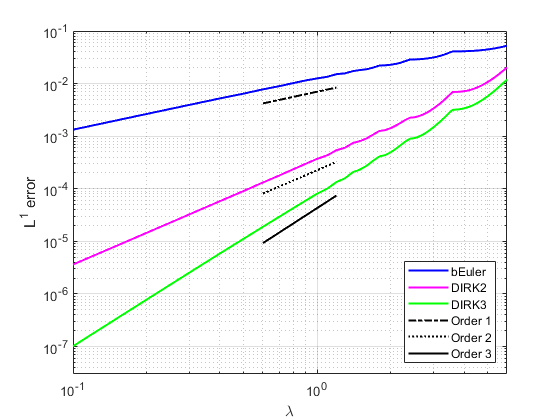}
\end{minipage}
\begin{minipage}[b!]{0.32\linewidth}
	\centering
	\includegraphics[width=\textwidth]{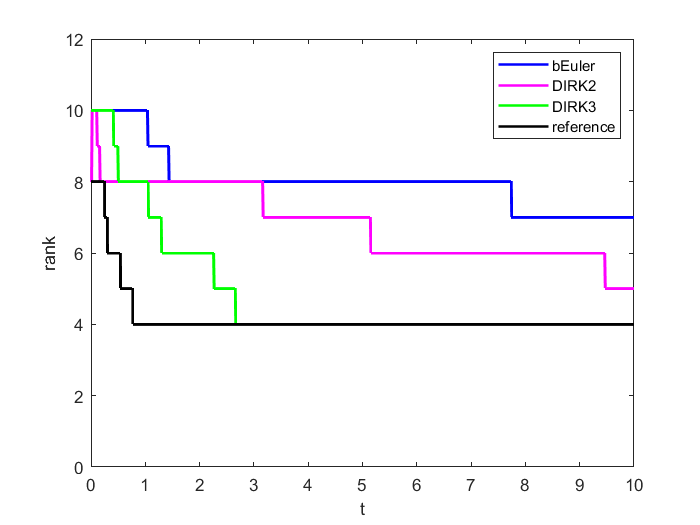}
\end{minipage}
\begin{minipage}[b!]{0.32\linewidth}
	\centering
	\includegraphics[width=\textwidth]{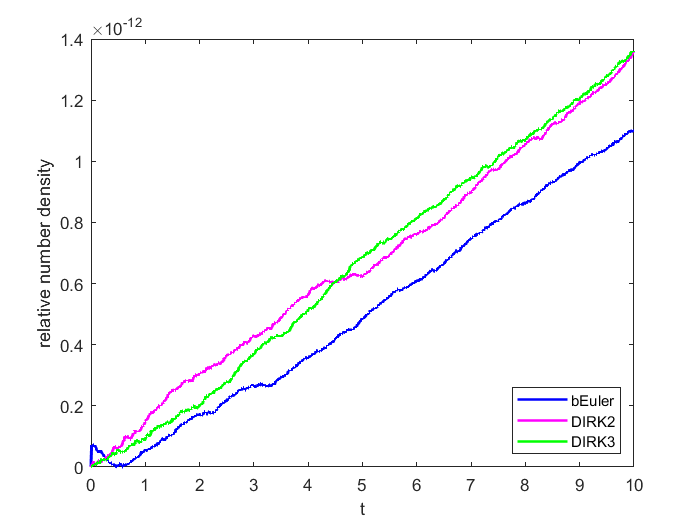}
\end{minipage}
\caption{(left) Error plot for \eqref{eq:test_diffusion} with initial condition \eqref{eq:diffusion_IC} using the first-, second- and third-order RAIL schemes; mesh size $N=200$, tolerance $\epsilon=1.0E-08$, final time $T_f=0.5$, initial rank $r^0=20$. The rank of the solution (middle) and relative number density (right) to \eqref{eq:test_diffusion} with initial condition \eqref{eq:diffusion_IC}; mesh size $N=400$, tolerance $\epsilon=1.0E-08$, time-stepping size $\Delta t=0.3\Delta x$, initial rank $r^0=40$.}
\label{fig:tests1and2}
\end{figure}

The rank of the solution and the relative number number density/mass are shown in Figure \ref{fig:tests1and2}. For the results in those figures, we use a mesh $N=400$, tolerance $\epsilon=1.0E-08$, time-stepping size $\Delta t=0.3\Delta x$, and initial rank $r^0=40$. We also include a reference solution computed with a mesh $N=600$, tolerance $\epsilon=1.0E-08$, time-stepping size $\Delta t = 0.1\Delta x$, initial rank $r^0=60$, and using DIRK3. We expect the rank-two initial condition \eqref{eq:diffusion_IC} to instantaneously increase in rank due to the diffusive dynamics. As seen in Figure \ref{fig:tests1and2}, the RAIL scheme captures this rank increase, with the higher-order schemes better capturing the rank decay towards equilibrium. As one would expect, smaller time-stepping sizes and higher-order time discretizations do a better job at capturing the rank, as seen based on the reference solution. Furthermore, the mass/number density is well-conserved, with the slight changes in mass coming from computing the QR factorization and SVD required to orthogonalize and diagonalize the final solution after truncation.


\subsection{Rigid body rotation with diffusion}

\begin{equation}\label{eq:test_rigid}
    u_t - yu_x + xu_y = d(u_{xx} + u_{yy}) + \phi,\qquad x,y\in(-2\pi,2\pi)
\end{equation}
where $\phi(x,y,t) = (6d - 4xy - 4d(x^2+9y^2))\text{exp}(-(x^2+3y^2+2d t))$, the exact solution is $u(x,y,t) = \text{exp}(-(x^2+3y^2+2d t))$, and $d=1/5$. As seen in Figure \ref{fig:tests3and4}, we observe the expected accuracies when using the first-, second- and third-order RAIL schemes with IMEX(1,1,1), IMEX(2,2,2) and IMEX(4,4,3), respectively. Notice that the high-order accuracy was still achieved despite the diffusion being smaller in magnitude than the advection. We used a mesh $N=200$, tolerance $\epsilon=1.0E-08$, final time $T_f=0.5$, $\lambda$ varying from 0.1 to 2, and initial rank $r^0=20$.

\begin{figure}[t!]
\begin{minipage}[b!]{0.32\linewidth}
	\centering
	\includegraphics[width=\textwidth]{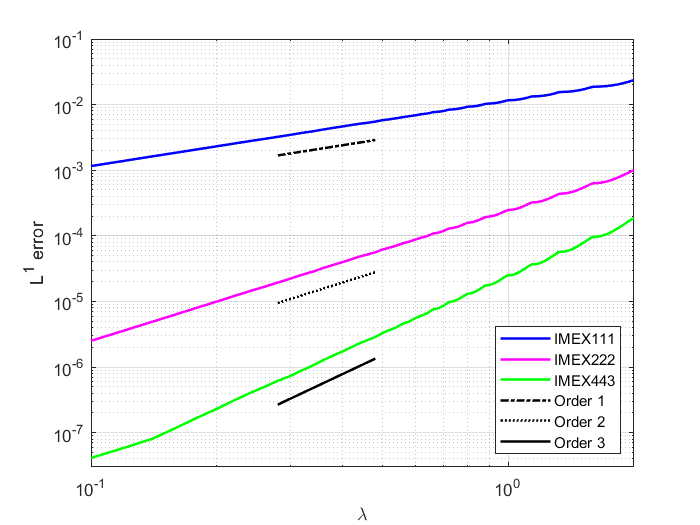}
\end{minipage}
\begin{minipage}[b!]{0.32\linewidth}
	\centering
	\includegraphics[width=\textwidth]{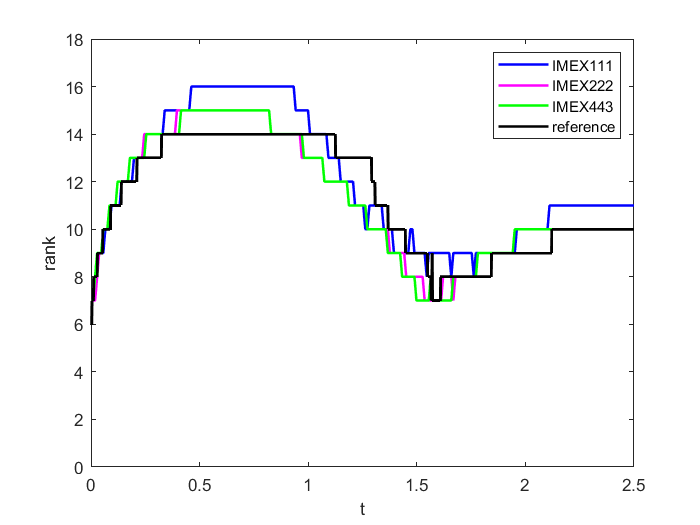}
\end{minipage}
\begin{minipage}[b!]{0.32\linewidth}
	\centering
	\includegraphics[width=\textwidth]{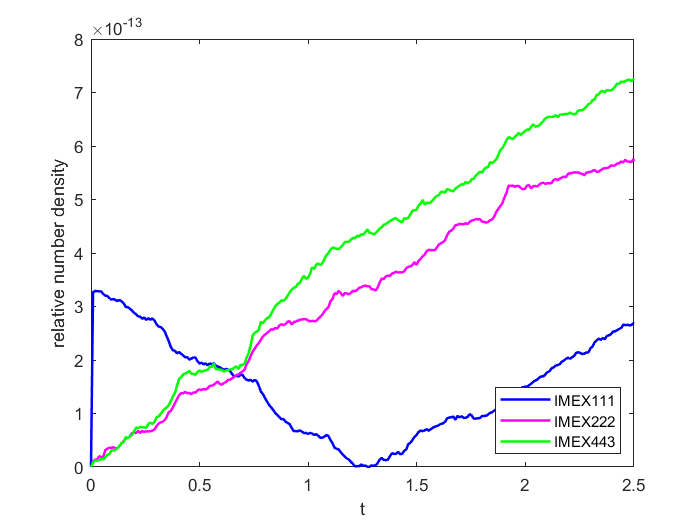}
\end{minipage}
\caption{(left) Error plot for \eqref{eq:test_rigid} with initial condition $\text{exp}(-(x^2+3y^2+2d t))$ using the first-, second- and third-order RAIL schemes; mesh size $N=200$, tolerance $\epsilon=1.0E-08$, final time $T_f=0.5$, initial rank $r^0=20$. The rank of the solution (middle) and relative number density (right) to \eqref{eq:test_rigid} with initial condition $\text{exp}(-(x^2+9y^2))$; mesh size $N=200$, tolerance $\epsilon=1.0E-08$, time-stepping size $\Delta t=0.15\Delta x$, initial rank $r^0=20$.}
\label{fig:tests3and4}
\end{figure}

Setting $\phi(x,y,t)=0$ and $u(x,y,t=0)=\text{exp}(-(x^2+9y^2))$, we expect the solution to rotate counterclockwise about the origin while slowly diffusing. Theoretically, the exact rank should be one at $t=0$ and $t=\pi/2$; the rank should be higher in-between these time stamps. The rank of the solution and the relative number number density/mass are shown in Figure \ref{fig:tests3and4}. For the results in those figures, we use a mesh $N=200$, tolerance $\epsilon=1.0E-08$, time-stepping size $\Delta t=0.15\Delta x$, and initial rank $r^0=20$. We also include a reference solution computed with a mesh $N=400$, tolerance $\epsilon=1.0E-08$, time-stepping size $\Delta t = 0.05\Delta x$, initial rank $r^0=40$, and using IMEX(4,4,3). As seen in Figure \ref{fig:tests3and4}, the RAIL scheme captures this rank behavior for the first-, second- and third-order schemes. Figure \ref{fig:test4_solution} plots the solution at times $t=0$, $t=\pi/4$ and $t=\pi/2$ to better interpret this rank behavior. Lastly, the mass/number density is well-conserved, with the slight changes in mass coming from computing the QR factorization and SVD required to orthogonalize and diagonalize the final solution after truncation.

\begin{figure}[t!]
\begin{minipage}[b!]{0.32\linewidth}
	\centering
	\includegraphics[width=\textwidth]{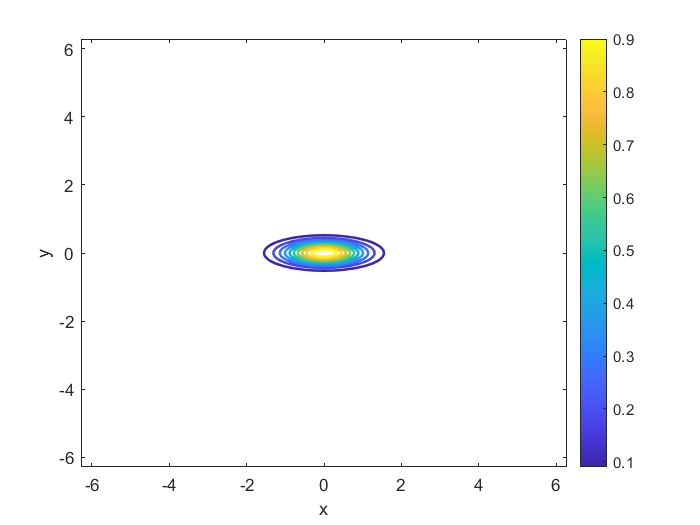}
\end{minipage}
\begin{minipage}[b!]{0.32\linewidth}
	\centering
	\includegraphics[width=\textwidth]{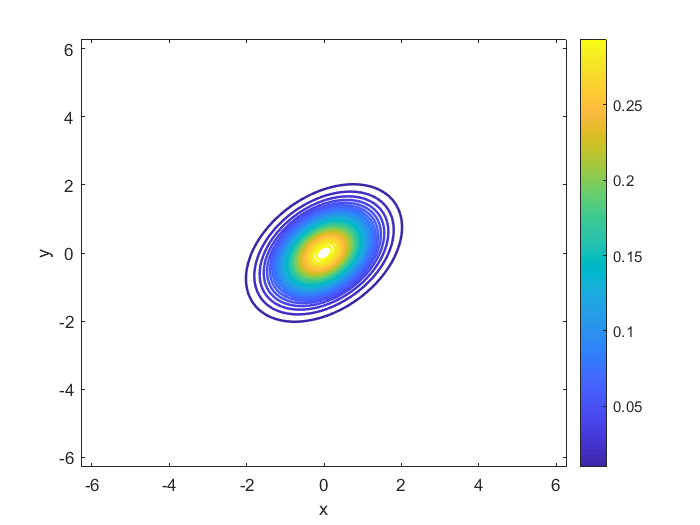}
\end{minipage}
\begin{minipage}[b!]{0.32\linewidth}
	\centering
	\includegraphics[width=\textwidth]{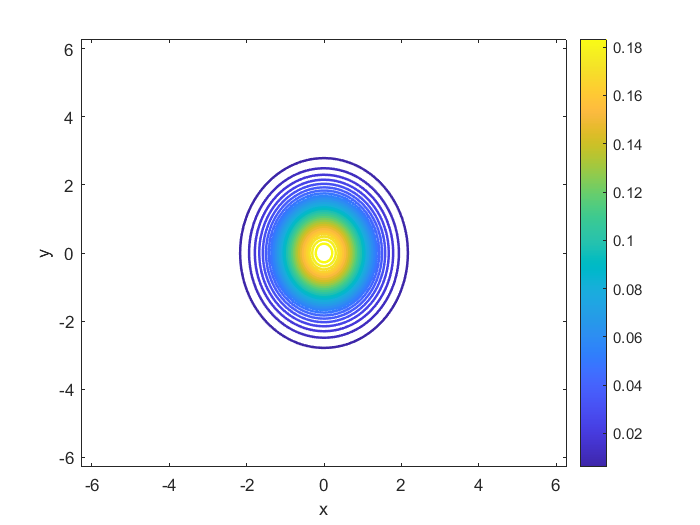}
\end{minipage}
\caption{Various snapshots of the numerical solution to equation \eqref{eq:test_rigid} with initial condition $\text{exp}(-(x^2+9y^2))$. Mesh size $N=200$, tolerance $\epsilon=1.0E-08$, time-stepping size $\Delta t=0.15\Delta x$, initial rank $r^0=20$, using IMEX(4,4,3). Times: 0, $\pi/4$, $\pi/2$.}
\label{fig:test4_solution}
\end{figure}


\subsection{Swirling deformation with diffusion}

\begin{equation}\label{eq:test_swirl}
    u_t - \big(\cos^2{(x/2)}\sin{(y)}f(t)u\big)_x + \big(\sin{(x)}\cos^2{(y/2)}f(t)u\big)_y = u_{xx} + u_{yy},\ x,y\in(-\pi,\pi)
\end{equation}
where we set $f(t) = \cos{(\pi t/T_f)}\pi$. The initial condition is the smooth (with $C^5$ smoothness) cosine bell
\begin{equation}\label{eq:cosbell}
u(x,y,t=0) = 
\begin{cases}
    r_0^b\cos^6{\left(\frac{r^b(x,y)\pi}{2r_0^b}\right)},&\text{if}\ r^b(x,y)<r_0^b,\\
    0,&\text{otherwise},
\end{cases}
\end{equation}
where $r_0^b = 0.3\pi$ and $r^b(x,y) = \sqrt{(x-x_0^b)^2 + (y-y_0^b)^2}$ with $(x_0^b,y_0^b) = (0.3\pi,0)$. Since there is no analytic solution, we use a full-rank reference solution computed with mesh $N=300$, time-stepping size $\Delta t = 0.05\Delta x$, and IMEX(4,4,3). As seen in Figure \ref{fig:tests5and6}, we observe the expected accuracies when using the first-, second- and third-order RAIL schemes with IMEX(1,1,1), IMEX(2,2,2) and IMEX(4,4,3), respectively. We used a mesh $N=100$, tolerance $\epsilon=1.0E-08$, final time $T_f=0.5$, $\lambda$ varying from 0.1 to 2, and initial rank $r^0=15$.

\begin{figure}[t!]
\begin{minipage}[b!]{0.32\linewidth}
	\centering
	\includegraphics[width=\textwidth]{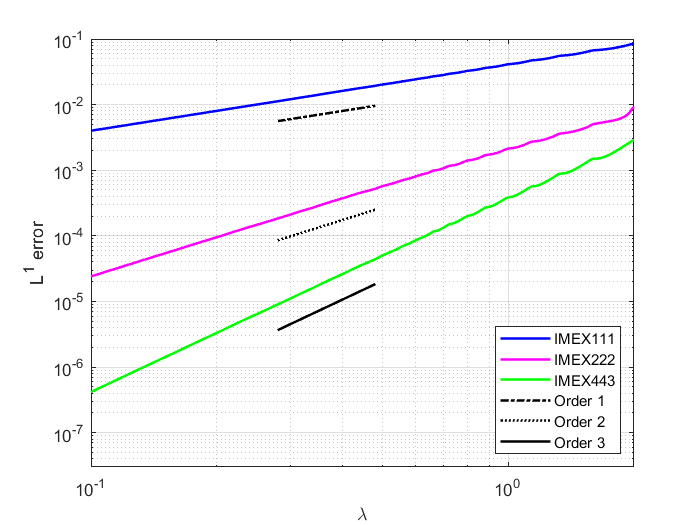}
\end{minipage}
\begin{minipage}[b!]{0.32\linewidth}
	\centering
	\includegraphics[width=\textwidth]{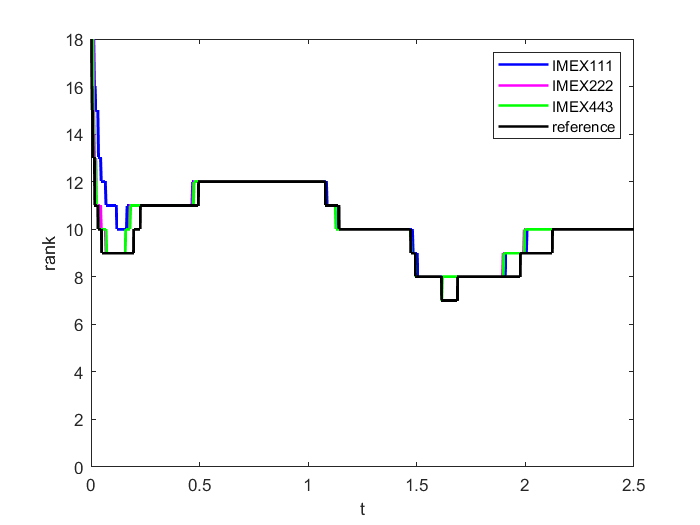}
\end{minipage}
\begin{minipage}[b!]{0.32\linewidth}
	\centering
	\includegraphics[width=\textwidth]{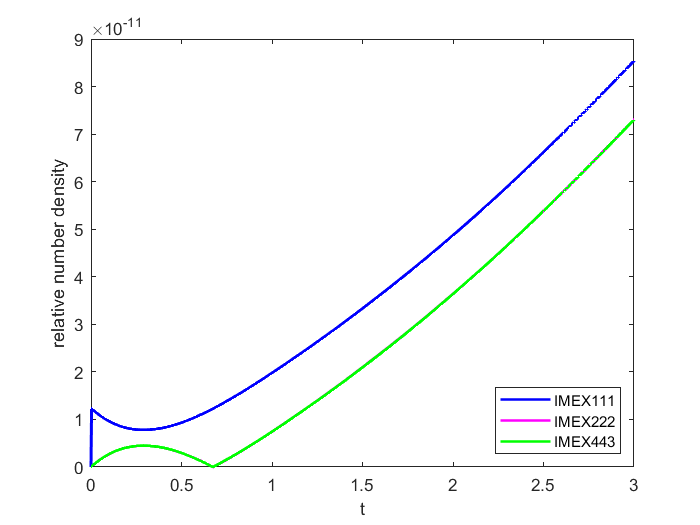}
\end{minipage}
\caption{(left) Error plot for \eqref{eq:test_swirl} with initial condition \eqref{eq:cosbell} using the first-, second- and third-order RAIL schemes; mesh size $N=100$, tolerance $\epsilon=1.0E-08$, final time $T_f=0.5$, initial rank $r^0=15$. The rank of the solution (middle) and relative number density (right) to \eqref{eq:test_swirl} with initial condition \eqref{eq:cosbell}; mesh size $N=300$, tolerance $\epsilon=1.0E-08$, time-stepping size $\Delta t=0.15\Delta x$, initial rank $r^0=30$.}
\label{fig:tests5and6}
\end{figure}

Looking at the flow field, we expect the rank to be roughly the same at times $t=0$, $t=T_f/2$ and $t=T_f$, and to be higher in-between these time stamps. The rank of the solution and the relative number number density/mass are shown in Figure \ref{fig:tests5and6}. For the results in those figures, we use a mesh $N=300$, tolerance $\epsilon=1.0E-08$, time-stepping size $\Delta t=0.15\Delta x$, and initial rank $r^0=30$. We also include a reference solution computed with a mesh $N=300$, tolerance $\epsilon=1.0E-08$, time-stepping size $\Delta t = 0.05\Delta x$, initial rank $r^0=30$, and using IMEX(4,4,3). As seen in Figure \ref{fig:tests5and6}, the RAIL scheme was able to capture the first hump and somewhat the second hump. This might be due to the diffusion starting to have a larger influence on the solution around time $t=2.5$. Furthermore, the mass/number density is globally conserved on the order of $1.0E-11$.


\subsection{0D2V Lenard-Bernstein-Fokker-Planck equation}

We consider a simplified Fokker-Planck model, the Lenard-Bernstein-Fokker-Planck equation \cite{dougherty1964,lenard1958plasma} (also known as the Dougherty-Fokker-Planck equation), used to describe weakly coupled collisional plasmas. Restricting ourselves to a single species plasma in 0D2V phase space, that is, zero spatial dimensions and two velocity dimensions, we solve
\begin{equation}\label{eq:test_LBFP}
    f_t - \big((v_x - \overline{v}_x)f\big)_{v_x} - \big((v_y - \overline{v}_y)f\big)_{v_y} = D(f_{v_xv_x} + f_{v_yv_y}),\quad v_x,v_y\in(-8,8)
\end{equation}
with gas constant $R=1/6$, ion temperature $T=3$, thermal velocity $v_{th}=\sqrt{2RT}=\sqrt{2D}=1$, number density $n=\pi$, and bulk velocities $\overline{v}_x=\overline{v}_y=0$. These quantities were chosen for scaling convenience. The equilibrium solution is the Maxwellian distribution function
\begin{equation}\label{eq:maxwellian}
    f_M(v_x,v_y) = \frac{n}{2\pi RT}\text{exp}\left(-\frac{(v_x-\overline{v}_x)^2 + (v_y-\overline{v}_y)^2}{2RT}\right).
\end{equation}
Relaxation of the system is tested using the initial distribution function
\[f(v_x,v_y,t=0) = f_{M1}(v_x,v_y) + f_{M2}(v_x,v_y),\]
that is, the sum of two randomly generated Maxwellians such that the total macro-parameters are preserved. The number densities, bulk velocities, and temperatures of the Maxwellians are listed in Table \ref{table:twoMaxwellians}. We set $\overline{v}_y=0$ so that the two generated Maxwellians are only shifted along the $v_x$ axis.

\begin{table}[h!] 
\begin{center}
\label{}
\begin{tabular}{|c|c|c|} 
\hline 
&$f_{M1}$&$f_{M2}$\\
\hline
$n$&1.990964530353041&1.150628123236752\\
\hline
$\overline{v}_x$&0.4979792385268875&-0.8616676237412346\\
\hline
$\overline{v}_y$&0&0\\
\hline
$T$&2.46518981703837&0.4107062104302872\\
\hline
\end{tabular} 
\caption{$n=\pi$, $\overline{\mathbf{v}}=\mathbf{0}$, and $T=3$.}
\label{table:twoMaxwellians}
\end{center} 
\end{table}

We use a mesh $N=300$, time-stepping size $\Delta t=0.15\Delta x$, singular value tolerance $\epsilon=1.0E-06$, and initial rank $r^0=30$. We also include a reference solution computed with a mesh $N=400$, tolerance $\epsilon=1.0E-06$, time-stepping size $\Delta t = 0.05\Delta x$, initial rank $r^0=40$, and using IMEX(4,4,3). Unlike the previous examples, the equilibrium solution to equation \eqref{eq:test_LBFP} is a Maxwellian distribution function. To better respect the physics driving the solution to the Maxwellian \eqref{eq:maxwellian} as $t\rightarrow\infty$, we use a Maxwellian distribution for the weight function $w(v_x,v_y)$ used to scale the solution in the truncation procedure. We use the weight function
\begin{equation}\label{eq:robustnessparameter}
    w(v_x,v_y)=w_1(v_x)w_2(v_y)\coloneqq\big(\text{exp}(-v_x^2/2)+\delta\big)\big(\text{exp}(-v_y^2/2)+\delta\big),
\end{equation}
where $\delta=5.0E-09$ is some small robustness parameter to avoid dividing by zero near the boundary. The solution rank and relative number density are shown in Figure \ref{fig:test8}. Similar to the diffusion equation with initial condition \eqref{eq:diffusion_IC}, we expect the rank-two initial distribution function to immediately increase in rank before decaying towards equilibrium. As seen in Figure \ref{fig:test8}, the RAIL scheme captures this behavior, and the number density is globally conserved on the order of $1.0E-11$. Furthermore, Figure \ref{fig:test8_solution} plots the solution at times $t=0$, $t=0.25$ and $t=1$ to better interpret this rank behavior. Lastly, the RAIL scheme captures the theoretical linear exponential decay of the solution towards relaxation, that is, $\norm{f-f_M}_1$. As seen in Figure \ref{fig:test8}, $\norm{f-f_M}_1$ decays linearly down to $\mathcal{O}(1.0E-10)$, despite the tolerance being $\epsilon=1.0E-06$. This indicates that for this problem, the RAIL scheme preserves the equilibrium solution very well.

\begin{figure}[t!]
\begin{minipage}[b!]{0.32\linewidth}
	\centering
	\includegraphics[width=\textwidth]{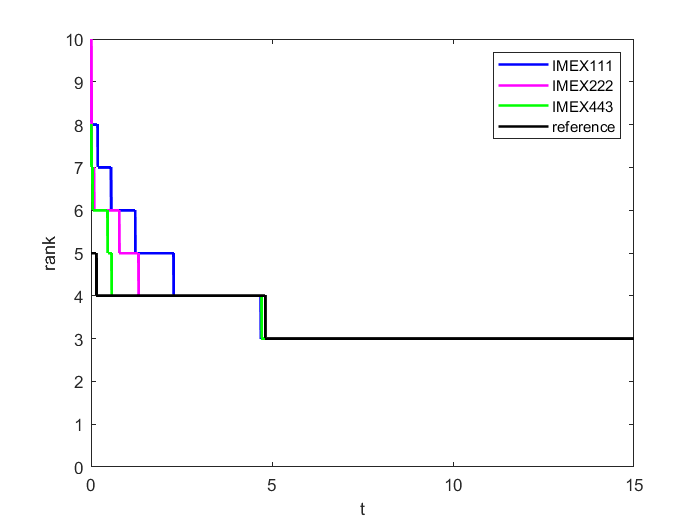}
\end{minipage}
\begin{minipage}[b!]{0.32\linewidth}
	\centering
	\includegraphics[width=\textwidth]{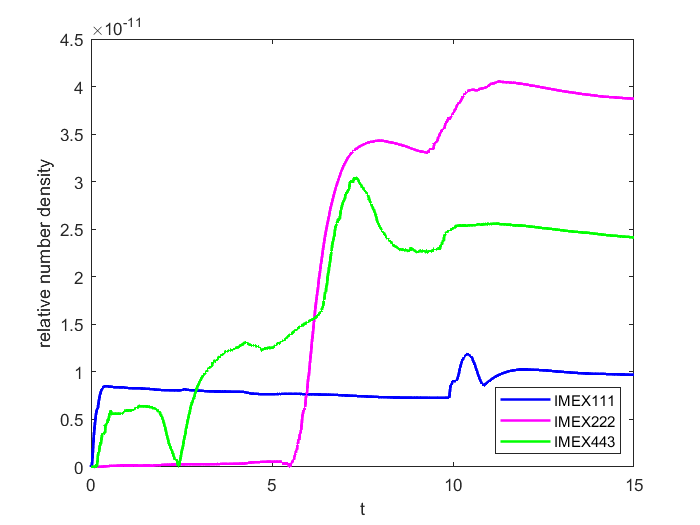}
\end{minipage}
\begin{minipage}[b!]{0.32\linewidth}
	\centering
	\includegraphics[width=\textwidth]{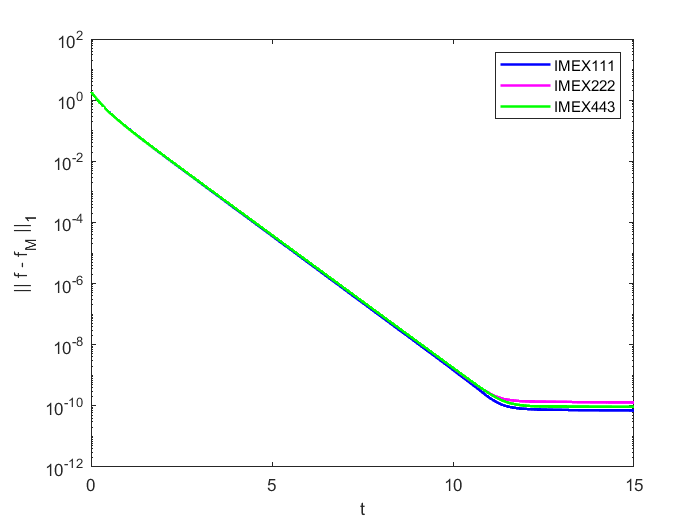}
\end{minipage}
\caption{The rank of the solution (left) and relative number density (middle) to \eqref{eq:test_LBFP} with initial condition $f_{M1}(v_x,v_y) + f_{M2}(v_x,v_y)$; mesh size $N=300$, tolerance $\epsilon=1.0E-06$, time-stepping size $\Delta t=0.15\Delta x$, initial rank $r^0=30$. (right) Error plot for \eqref{eq:test_LBFP} with initial condition $f_{M1}(v_x,v_y) + f_{M2}(v_x,v_y)$; mesh size $N=300$, tolerance $\epsilon=1.0E-06$, final time $T_f=15$, $\lambda=0.15$.}
\label{fig:test8}
\end{figure}

\begin{figure}[h!]
\begin{minipage}[b!]{0.32\linewidth}
	\centering
	\includegraphics[width=\textwidth]{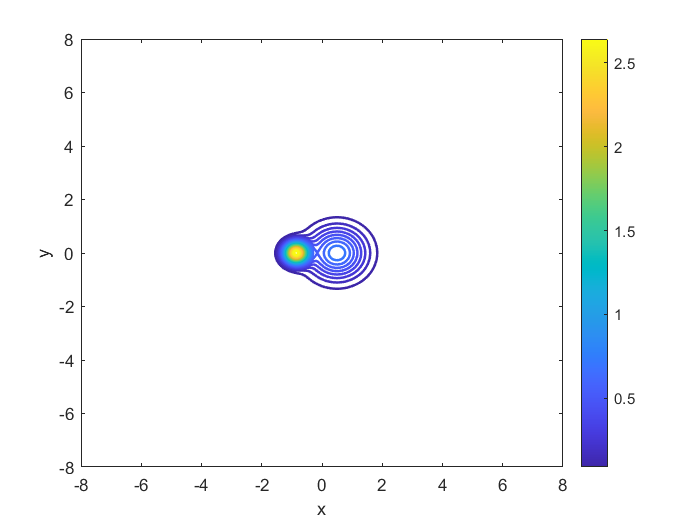}
\end{minipage}
\begin{minipage}[b!]{0.32\linewidth}
	\centering
	\includegraphics[width=\textwidth]{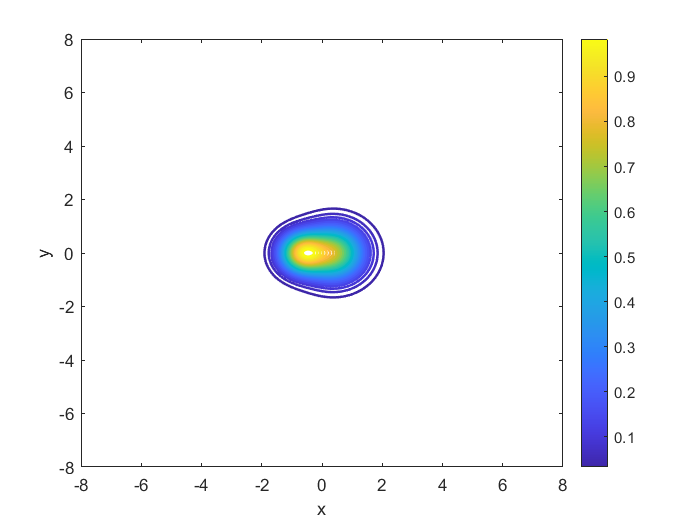}
\end{minipage}
\begin{minipage}[b!]{0.32\linewidth}
	\centering
	\includegraphics[width=\textwidth]{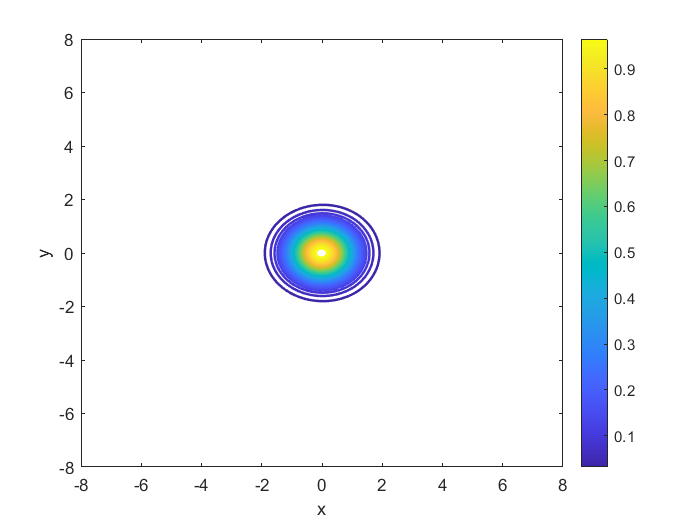}
\end{minipage}
\caption{Various snapshots of the numerical solution to equation \eqref{eq:test_LBFP} with initial condition $f_{M1}(v_x,v_y) + f_{M2}(v_x,v_y)$. Mesh size $N=300$, tolerance $\epsilon=1.0E-06$, time-stepping size $\Delta t=0.15\Delta x$, initial rank $r^0=30$, using IMEX(4,4,3). Times: 0, 0.25, 1.}
\label{fig:test8_solution}
\end{figure}

\section{Conclusion}
\label{sec:conclusion}

In this paper, we proposed a reduced augmentation implicit low-rank (RAIL) integrator for solving advection-diffusion equations and Fokker-Planck models. Both implicit and implicit-explicit RK methods were implemented. The partial differential equations were fully discretized into matrix equations. By spanning the low-order prediction (or updated) bases with the bases from the previous RK stages in a reduced augmentation procedure, the matrix equation was projected onto a richer space. A globally mass conservative truncation procedure was then applied to truncate the solution and keep as low rank as possible. Several tests demonstrated the RAIL scheme's ability to achieve high-order accuracy, capture solution rank, and conserve mass. Ongoing and future work includes deriving rigorous error bounds for the high-order RAIL scheme, as well as extending the algorithm to higher dimensions through other low-rank tensor decompositions.
\section*{Acknowledgements}
\label{sec:acknowledgements}

Research is supported by the National Science Foundation DMS-2111253, Air Force Office of Scientific Research FA955022-1-0390, and Department of Energy DE-SC0023164.

\appendix
\section*{Appendices}

\section{Butcher tables for stiffly accurate DIRK methods \cite{Ascher1997,Hairer1996}}\label{app:RK}
\begin{table}[h!]
    \centering
    \begin{minipage}[b]{0.49\linewidth}
    \centering
    \caption{DIRK2}
    \begin{tabular}{c|ll}
        $\nu$&$\nu$&0\\
        1&$1-\nu$&$\nu$\\
        \hline
        &$1-\nu$&$\nu$
    \end{tabular}
    \ \\
    Let $\nu=1-\sqrt{2}/2$.\\
    \ \\
    \ \\
    \end{minipage}
    \begin{minipage}[b]{0.49\linewidth}
    \centering
    \caption{DIRK3}
    \begin{tabular}{c|lll}
        $\nu$&$\nu$&0&0\\
        $\frac{1+\nu}{2}$&$\frac{1-\nu}{2}$&$\nu$&0\\
        1&$\beta_1$&$\beta_2$&$\nu$\\
        \hline
        &$\beta_1$&$\beta_2$&$\nu$
    \end{tabular}
    \ \\
    Let $\nu\approx 0.435866521508459$,\\
    $\beta_1=-\frac{3}{2}\nu^2+4\nu-\frac{1}{4}$,\\
    and $\beta_2=\frac{3}{2}\nu^2-5\nu+\frac{5}{4}$.
    \end{minipage}
\end{table}

\section{Discretizing advection-diffusion equations with low-rank flow fields and diagonal diffusion tensors}\label{app:advdiffdisc}
In this paper, we are particularly interested in solving advection-diffusion equation \eqref{eq:adv_diff} in two dimensions of the form
\begin{equation}
    \label{eq:adv_diff_2d}
    u_t + \left(a_1(x,y,t)u\right)_x + \left(a_2(x,y,t)u\right)_y = d_1u_{xx} + d_2u_{yy} + \phi(x,y,t),
\end{equation}
where $d_1,d_2>0$, and the flow field $\mathbf{a}(x,y,t)$ and source term $\phi(x,y,t)$ are assumed low-rank. To keep the presentation simple, we assume $a_1(x,y,t)$ and $a_2(x,y,t)$ are both rank-one. A truncated SVD in two dimensions, or a low-rank tensor decomposition in higher dimensions, can be used to extend to flow fields and source terms with (low) ranks greater than one. We show how we discretize linear advection terms below. Assume a rank-one flow field
\begin{equation}\label{eq:flowfield}
    a_{\ell}(x,y,t) = a_{\ell}^x(x,t)a_{\ell}^t(t)a_{\ell}^y(y,t),\qquad \ell=1,2,
\end{equation}
and assume a low-rank source term
\begin{equation}\label{eq:source}
    \phi(x,y,t) = \sum\limits_{p=1}^{r_{\phi}}{\sum\limits_{q=1}^{r_{\phi}}{\phi_{p}^x(x,t)\phi_{pq}^{t}(t)\phi_{q}^y(y,t)}}.
\end{equation}
where $r_{\phi}\ll N$. The basis functions for $\phi(x,y,t)$ do not necessarily need to be orthogonal. The matrix analogues to \eqref{eq:flowfield} and \eqref{eq:source} are respectively
\begin{equation}
    \mathbf{A}_{\ell}(t) = \mathbf{a}_{\ell}^x(t)a_{\ell}^t(t)(\mathbf{a}_{\ell}^y(t))^T\in\mathbb{R}^{N\times N},\qquad \ell=1,2,
\end{equation}
and
\begin{equation}\label{eq:Psi}
    \mathbf{\Phi}(t) = \mathbf{\Phi}^x(t)\mathbf{\Phi}^t(t)(\mathbf{\Phi}^y(t))^T\in\mathbb{R}^{N\times N},
\end{equation}
where $\mathbf{a}_{\ell}^x,\mathbf{a}_{\ell}^y$ ($\ell=1,2$) are column vectors of length $N$, $\mathbf{\Phi}^{x},\mathbf{\Phi}^{y}\in\mathbb{R}^{N\times r_{\phi}}$, and $\mathbf{\Phi}^{t}\in\mathbb{R}^{r_{\phi}\times r_{\phi}}$. Multiplying the rank-one separable flow field \eqref{eq:flowfield} with the low-rank solution \eqref{eq:schmidt}, we see that the matrix form of the flux functions $a_1u$ and $a_2u$ are
\begin{equation}\label{eq:fluxfunctions}
\big(\mathbf{a}_{\ell}^x*\bfV^{x}\big)\big(a_{\ell}^t\bfS\big)\big(\mathbf{a}_{\ell}^y*\bfV^{y}\big)^T,\qquad \ell=1,2,
\end{equation}
where $*$ denotes the columnwise Hadamard product, e.g.,
\begin{equation}
    \mathbf{a}_{\ell}^x*\bfV^{x} \coloneqq \Big[\mathbf{a}_{\ell}^x*\bfV^{x}(\ :\ ,1)\ ,\ \mathbf{a}_{\ell}^x*\bfV^{x}(\ :\ ,2)\ ,\ ...\ ,\ \mathbf{a}_{\ell}^x*\bfV^{x}(\ :\ ,r)\Big].
\end{equation}

As such, the matrix form of the divergence of the flux function is
\begin{equation}
    \big(\mathbf{D}_x(\mathbf{a}_{1}^x*\bfV^{x})\big)\big(a_{1}^t\bfS\big)\big(\mathbf{a}_{1}^y*\bfV^{y}\big)^T + \big(\mathbf{a}_{2}^x*\bfV^{x}\big)\big(a_{2}^t\bfS\big)\big(\mathbf{D}_y(\mathbf{a}_{2}^y*\bfV^{y})\big)^T,
\end{equation}
where $\mathbf{D}_x$ and $\mathbf{D}_y$ are differentiation matrices for the first partial derivatives of $x$ and $y$, respectively. In our numerical tests, we use spectral collocation methods when constructing the differentiation matrices \cite{hesthaven2007spectral,trefethen2000spectral}. Alternatively, other spatial discretizations over local stencils can be used with upwind treatment, e.g., WENO methods \cite{shu2009high}, as has been done in other low-rank frameworks \cite{guo2024conservative,GuoVlasovLoMacDG2023}. We also use spectral collocation methods to construct differentiation matrices $\mathbf{D}_{xx}$ and $\mathbf{D}_{yy}$ for discretizing the diffusion terms. Solving equation \eqref{eq:adv_diff_2d} with the RAIL integrator using IMEX RK methods,
\begin{equation}
	\textbf{Im}(t,\mathbf{U}) = \mathbf{D}_{xx}\mathbf{U} + \mathbf{U}\mathbf{D}_{yy}^T,
\end{equation}
\begin{equation}
	\textbf{Ex}(t,\mathbf{U}) = -\Big\{\big(\mathbf{D}_x(\mathbf{a}_{1}^x*\bfV^{x})\big)\big(a_{1}^t\bfS\big)\big(\mathbf{a}_{1}^y*\bfV^{y}\big)^T + \big(\mathbf{a}_{2}^x*\bfV^{x}\big)\big(a_{2}^t\bfS\big)\big(\mathbf{D}_y(\mathbf{a}_{2}^y*\bfV^{y})\big)^T\Big\}.
\end{equation}

\section{RAIL algorithms and a globally mass conservative truncation procedure}\label{app:algos}
This appendix includes the algorithms for the reduced augmentation procedure (Algorithm \ref{algo:bases}), implicit RAIL scheme (Algorithm \ref{algo:implicit_scheme}), and globally mass conservative truncation procedure \cite{guo2024conservative} (Algorithm \ref{algo:construn}). $\bfV^{\cdot,(k)}_{\ddagger}$ can replace $\bfV^{\cdot,(k)}_{\dagger}$ in Algorithm \ref{algo:bases} to output $\hat{\bfV}^{\cdot,(k)}$. Modifying Algorithm \ref{algo:implicit_scheme} for the implicit-explicit RAIL algorithm is straightforward; see Section \ref{sec:RAIL_IMEX}. We use MATLAB$^{\text{\textregistered}}$ syntax when appropriate. Also, we used the \texttt{sylvester} function in MATLAB$^{\text{\textregistered}}$ to solve all Sylvester equations.
\begin{algorithm}[h!]
	\caption{Reduced augmentation procedure at the $k$th Runge-Kutta stage}
	\label{algo:bases}
		{\bf Input:} $\bfV_{\dagger}^{x,(k)}, \bfV^{x,(k-1)}, ..., \bfV^{x,(1)}, \bfV^{x,(0)}$ and $\bfV_{\dagger}^{y,(k)}, \bfV^{y,(k-1)}, ..., \bfV^{y,(1)}, \bfV^{y,(0)}$.\\
	    {\bf Output:} $\bfV^{x,(k)}_{\star}$ and $\bfV^{y,(k)}_{\star}$.  
	\begin{algorithmic}[1]
	   \State Augment the bases.
        \Statex \texttt{[Qx,Rx] = qr([Vx\textunderscore dagger,Vx\textunderscore\{k-1\},...,Vx\textunderscore 1,Vx\textunderscore 0],0);}
        \Statex \texttt{[Qy,Ry] = qr([Vy\textunderscore dagger,Vy\textunderscore\{k-1\},...,Vy\textunderscore 1,Vy\textunderscore 0],0);}
        \State Reduce with respect to tolerance $1.0e-12$.
        \Statex \texttt{[Vx\textunderscore temp,Sx\textunderscore temp,$\sim$] = svd(Rx,0);}
        \Statex \texttt{[Vy\textunderscore temp,Sy\textunderscore temp,$\sim$] = svd(Ry,0);}
        \Statex \texttt{rx = find(diag(Sx\textunderscore temp)>1.0e-12,1,`last');}
        \Statex \texttt{ry = find(diag(Sy\textunderscore temp)>1.0e-12,1,`last');}
        \Statex \texttt{R = max(rx,ry);}
        \State Keep the corresponding left singular vectors.
        \Statex \texttt{Vx\textunderscore star = Qx*Vx\textunderscore temp(:,1:R);}
        \Statex \texttt{Vy\textunderscore star = Qy*Vy\textunderscore temp(:,1:R);}
	\end{algorithmic}
\end{algorithm}


\begin{algorithm}[t!]
	\caption{The RAIL algorithm for diffusion equations using stiffly accurate DIRK methods}
	\label{algo:implicit_scheme}
		{\bf Input:} $\bfV^{x,n}\bfS^n(\bfV^{y,n})^T$ and $r^n$.\\
	    {\bf Output:} $\bfV^{x,n+1}\bfS^{n+1}(\bfV^{y,n+1})^T$ and $r^{n+1}$. 
	\begin{algorithmic}[1]
        \For{each Runge-Kutta stage $k=1,2,...,s$}
             \State Compute the predictions $\bfV_{\dagger}^{x,(k)}$ and $\bfV_{\dagger}^{y,(k)}$; not needed for first stage.
	      \State Construct the projection bases $\bfV^{x,(k)}_{\star}$ and $\bfV^{y,(k)}_{\star}$ using Algorithm \ref{algo:bases}; not needed in first-order scheme.
            \State Compute $\mathbf{W}^{(k-1)}$ in \eqref{eq:DIRK_k_RHS}.
            \State Solve the $\mathbf{K}$ and $\mathbf{L}$ equations \eqref{eq:Kstep_DIRK_k}-\eqref{eq:Lstep_DIRK_k} for $\mathbf{K}^{(k)}$ and $\mathbf{L}^{(k)}$.
            \State Compute and store the update bases $\bfV_{\ddagger}^{x,(k)}$ and $\bfV_{\ddagger}^{y,(k)}$.
            \Statex\quad\ \texttt{[Vx\textunderscore ddagger,$\sim$] = qr(K,0);}
            \Statex\quad\ \texttt{[Vy\textunderscore ddagger,$\sim$] = qr(L,0);}
            \State Construct the bases $\hat{\bfV}^{x,(k)}$ and $\hat{\bfV}^{y,(k)}$ using Algorithm \ref{algo:bases}.
            \State Using $\hat{\bfV}^{x,(k)}$ and $\hat{\bfV}^{y,(k)}$, solve the $\mathbf{S}$ equation \eqref{eq:Sstep_DIRK_k} and store $\hat{\bfS}^{(k)}$.
	      \State Truncate the solution, and store $\bfV^{x,(k)}$, $\bfS^{(k)}$, $\bfV^{y,(k)}$ and $r^{(k)}$.
		\State Compute and store $\mathbf{Y}_k$ according to equation \eqref{eq:DIRK_k}.
        \EndFor
        \State Store the final solution $\bfV^{x,n+1}$, $\bfS^{n+1}$, $\bfV^{y,n+1}$ and $r^{n+1}$.
	\end{algorithmic}
\end{algorithm}


\begin{algorithm}[t!]
	\caption{A globally mass conservative truncation procedure \cite{guo2024conservative}}
	\label{algo:construn}
		{\bf Input:} Pre-truncated solution $\bfV^{x,(k)}\bfS^{(k)}(\bfV^{y,(k)})^T$ and initial number density $\rho_0$.\\
	    {\bf Output:} (Redefined) truncated solution $\bfV^{x,(k)}\bfS^{(k)}(\bfV^{y,(k)})^T$ and rank $r^{(k)}$. 
	\begin{algorithmic}[1]
        \State Compute $\mathbf{f}_1$.
        \Statex \texttt{Vx\textunderscore f1 = w\textunderscore 1;}
        \Statex \texttt{Vy\textunderscore f1 = w\textunderscore 2;}
        \Statex \texttt{S\textunderscore f1 = rho/((dx*dy)*sum(w\textunderscore 1)*sum(w\textunderscore 2));}
        \Statex \texttt{r\textunderscore f1 = 1;}
        \State Compute and truncate $\mathbf{f}_2$.
        \Statex \texttt{[Qx,Rx] = qr((1./sqrt(w\textunderscore 1)).*[Vx\textunderscore f1,Vx\textunderscore k],0);}
        \Statex \texttt{[Qy,Ry] = qr((1./sqrt(w\textunderscore 2)).*[Vy\textunderscore f1,Vy\textunderscore k],0);}
        \Statex \texttt{[U,S,V] = svd(Rx*blkdiag(-S\textunderscore f1,S\textunderscore k)*Ry',0);}
        \Statex \texttt{r\textunderscore f2 = find(diag(S)>epsilon,1,`last');}
        \Statex \texttt{Vx\textunderscore f2 = sqrt(w\textunderscore 1).*(Qx*U(:,1:r\textunderscore f2));}
        \Statex \texttt{Vy\textunderscore f2 = sqrt(w\textunderscore 2).*(Qy*V(:,1:r\textunderscore f2));}
        \Statex \texttt{S\textunderscore f2 = S(1:r\textunderscore f2,1:r\textunderscore f2);}
        \State Compute the conservatively truncated solution $\mathbf{f}^{n+1}$.
        \Statex \texttt{[Qx,Rx] = qr([Vx\textunderscore f1,Vx\textunderscore f2],0);}
        \Statex \texttt{[Qy,Ry] = qr([Vy\textunderscore f1,Vy\textunderscore f2],0);}
        \Statex \texttt{[U,S,V] = svd(Rx*blkdiag(S\textunderscore f1,S\textunderscore f2)*Ry');}
        \Statex \texttt{Vx\textunderscore k = Qx*U;}
        \Statex \texttt{Vy\textunderscore k = Qy*V;}
        \Statex \texttt{S\textunderscore k = S;}
        \Statex \texttt{r\textunderscore k = r\textunderscore f1 + r\textunderscore f2;}
	\end{algorithmic}
\end{algorithm}

\medskip

\printbibliography

@article{ceruti2024parallel,
  title={A parallel rank-adaptive integrator for dynamical low-rank approximation},
  author={Ceruti, Gianluca and Kusch, Jonas and Lubich, Christian},
  journal={SIAM Journal on Scientific Computing},
  volume={46},
  number={3},
  pages={B205--B228},
  year={2024},
  publisher={SIAM}
}

@article{Kusch2024,
  title={Second-order robust parallel integrators for dynamical low-rank approximation},
  author={Kusch, Jonas},
  journal={arXiv preprint arXiv:2403.02834},
  year={2024}
}

@article{Ceruti2024,
  title={A robust second-order low-rank BUG integrator based on the midpoint rule},
  author={Ceruti, Gianluca and Einkemmer, Lukas and Kusch, Jonas and Lubich, Christian},
  journal={BIT Numerical Mathematics},
  volume={64},
  number={3},
  pages={30},
  year={2024},
  publisher={Springer}
}

@article{Carrel2023,
  title={Projected exponential methods for stiff dynamical low-rank approximation problems},
  author={Carrel, Benjamin and Vandereycken, Bart},
  journal={arXiv preprint arXiv:2312.00172},
  year={2023}
}

@article{kieri2019projection,
  title={Projection methods for dynamical low-rank approximation of high-dimensional problems},
  author={Kieri, Emil and Vandereycken, Bart},
  journal={Computational Methods in Applied Mathematics},
  volume={19},
  number={1},
  pages={73--92},
  year={2019},
  publisher={De Gruyter}
}

@article{seguin2024,
  title={From low-rank retractions to dynamical low-rank approximation and back},
  author={S{\'e}guin, Axel and Ceruti, Gianluca and Kressner, Daniel},
  journal={BIT Numerical Mathematics},
  volume={64},
  number={3},
  pages={25},
  year={2024},
  publisher={Springer}
}

@article{charous2023dynamically,
  title={Dynamically orthogonal Runge--Kutta schemes with perturbative retractions for the dynamical low-rank approximation},
  author={Charous, Aaron and Lermusiaux, Pierre FJ},
  journal={SIAM Journal on Scientific Computing},
  volume={45},
  number={2},
  pages={A872--A897},
  year={2023},
  publisher={SIAM}
}

@article{Appelo2024,
  title={Robust implicit adaptive low rank time-stepping methods for matrix differential equations},
  author={Appel{\"o}, Daniel and Cheng, Yingda},
  journal={arXiv preprint arXiv:2402.05347},
  year={2024}
}

@article{ElKahza2024,
  title={Krylov-based adaptive-rank implicit time integrators for stiff problems with application to nonlinear Fokker-Planck kinetic models},
  author={El Kahza, Hamad and Taitano, William and Qiu, Jing-Mei and Chac{\'o}n, Luis},
  journal={Journal of Computational Physics},
  pages={113332},
  year={2024},
  publisher={Elsevier}
}

@article{naderi2024cur,
  title={CUR for Implicit Time Integration of Random Partial Differential Equations on Low-Rank Matrix Manifolds},
  author={Naderi, Mohammad Hossein and Akhavan, Sara and Babaee, Hessam},
  journal={arXiv preprint arXiv:2408.16591},
  year={2024}
}

@article{Dolgov2014,
  title={Low-rank approximation in the numerical modeling of the Farley--Buneman instability in ionospheric plasma},
  author={Dolgov, Sergey V and Smirnov, Alexander P and Tyrtyshnikov, EE},
  journal={Journal of Computational Physics},
  volume={263},
  pages={268--282},
  year={2014},
  publisher={Elsevier}
}

@article{shu2009high,
  title={High order weighted essentially nonoscillatory schemes for convection dominated problems},
  author={Shu, Chi-Wang},
  journal={SIAM review},
  volume={51},
  number={1},
  pages={82--126},
  year={2009},
  publisher={SIAM}
}

@article{wang2015stability,
  title={Stability and error estimates of local discontinuous Galerkin methods with implicit-explicit time-marching for advection-diffusion problems},
  author={Wang, Haijin and Shu, Chi-Wang and Zhang, Qiang},
  journal={SIAM Journal on Numerical Analysis},
  volume={53},
  number={1},
  pages={206--227},
  year={2015},
  publisher={SIAM}
}

@article{golub1979hessenberg,
  title={A Hessenberg-Schur method for the problem AX+ XB= C},
  author={Golub, Gene and Nash, Stephen and Van Loan, Charles},
  journal={IEEE Transactions on Automatic Control},
  volume={24},
  number={6},
  pages={909--913},
  year={1979},
  publisher={IEEE}
}

@article{bartels1972,
  title={Solution of the matrix equation AX+ XB= C [F4]},
  author={Bartels, Richard H. and Stewart, George W},
  journal={Communications of the ACM},
  volume={15},
  number={9},
  pages={820--826},
  year={1972},
  publisher={Association for Computing Machinery (ACM)}
}

@phdthesis{nakao2023thesis,
  title={Speeding Up High-Order Algorithms in Computational Fluid and Kinetic Dynamics: Based on Characteristics Tracing and Low-Rank Structures},
  author={Nakao, Joseph},
  year={2023},
  school={University of Delaware}
}

@book{trefethen2000spectral,
  title={Spectral methods in MATLAB},
  author={Trefethen, Lloyd N},
  year={2000},
  publisher={SIAM}
}

@book{hesthaven2007spectral,
  title={Spectral methods for time-dependent problems},
  author={Hesthaven, Jan S and Gottlieb, Sigal and Gottlieb, David},
  volume={21},
  year={2007},
  publisher={Cambridge University Press}
}

@article{dougherty1964,
  title={Model Fokker-Planck equation for a plasma and its solution},
  author={Dougherty, JP},
  journal={The Physics of Fluids},
  volume={7},
  number={11},
  pages={1788--1799},
  year={1964},
  publisher={American Institute of Physics}
}

@article{lenard1958plasma,
  title={Plasma oscillations with diffusion in velocity space},
  author={Lenard, Andrew and Bernstein, Ira B},
  journal={Physical Review},
  volume={112},
  number={5},
  pages={1456},
  year={1958},
  publisher={APS}
}

@article{guo2024conservative,
  title={A conservative low rank tensor method for the {V}lasov dynamics},
  author={Guo, Wei and Qiu, Jing-Mei},
  journal={SIAM Journal on Scientific Computing},
  volume={46},
  number={1},
  pages={A232--A263},
  year={2024},
  publisher={SIAM}
}

@article{GuoVlasovFlowMap2022,
    author = {Wei Guo and Jing-Mei Qiu},
    title = {A low rank tensor representation of linear transport and nonlinear Vlasov solutions and their associated flow maps},
    journal = {Journal of Computational Physics},
    volume = {458},
    pages = {111089},
    doi = {111089},
    year = {2022}
}

@article{guo2022local,
  title={A Local Macroscopic Conservative (LoMaC) low rank tensor method for the Vlasov dynamics},
  author={Guo, Wei and Qiu, Jing-Mei},
  journal={arXiv preprint arXiv:2207.00518},
  year={2022}
}

@article{GuoVlasovLoMacDG2023,
    author = {Wei Guo and Jannatul Ferdous Ema and Jing-Mei Qiu},
    title = {A Local Macroscopic Conservative (LoMaC) Low Rank Tensor Method with the Discontinuous Galerkin Method for the Vlasov Dynamics},
    journal = {Communications on Applied Mathematics and Computation},
    doi = {10.1007/s42967-023-00277-7},
    year = {2023}
}

@article{rodgers2022implicit,
  title={Implicit integration of nonlinear evolution equations on tensor manifolds},
  author={Rodgers, Abram and Venturi, Daniele},
  journal={J. Sci. Comput.},
  number={In Press},
  year={2023}
}

@article{rodgers2022adaptive,
  title={Adaptive integration of nonlinear evolution equations on tensor manifolds},
  author={Rodgers, Abram and Dektor, Alec and Venturi, Daniele},
  journal={Journal of Scientific Computing},
  volume={92},
  number={2},
  pages={39},
  year={2022},
  publisher={Springer}
}

@article{PengDLR2020spherical,
    author = {Zhuogang Peng and Ryan G. McClarren and Martin Frank},
    title = {A low-rank method for two-dimensional time-dependent radiation transport calculations},
    journal={Journal of Computational Physics},
    volume = {421},
    pages = {109735},
    year = {2020}
}

@Article{Kormann2015,
  author   = {Kormann, K.},
  journal  = {SIAM J. Sci. Comput.},
  title    = {{A semi-Lagrangian Vlasov solver in tensor train format}},
  year     = {2015},
  pages    = {613--632},
  volume   = {37},
  doi      = {10.1137/140971270},
  keywords = {65F99,82D10,Vlasov equation,tensor train decomposition},
}

@article{Ehrlacher2017,
	title={A dynamical adaptive tensor method for the {V}lasov--{P}oisson system},
	author={Ehrlacher, V. and Lombardi, D.},
	journal={J. Comput. Phys.},
	volume={339},
	pages={285--306},
	year={2017},
	publisher={Elsevier}
}

@Article{Koch2007,
  author  = {Koch, O. and Lubich, C.},
  journal = {SIAM J. Matrix Anal. Appl.},
  title   = {Dynamical Low-Rank Approximation},
  year    = {2007},
  pages   = {434-454},
  volume  = {29},
  doi     = {10.1137/050639703},
}

@article{Kieri2016,
  title={{Discretized dynamical low-rank approximation in the presence of small singular values}},
  author={E. Kieri and C. Lubich and H. Walach},
  journal={SIAM J. Numer. Anal.},
  volume={54},
  number={2},
  pages={1020--1038},
  year={2016}
}

@Article{Lubich2014,
  author  = {Lubich, C. and Oseledets, I.V.},
  journal = {BIT Numer. Math.},
  title   = {A projector-splitting integrator for dynamical low-rank approximation},
  year    = {2014},
  pages   = {171-188},
  volume  = {54},
  doi     = {10.1007/s10543-013-0454-0},
}

@Article{Einkemmer2018,
  author  = {Einkemmer, L. and Lubich, C.},
  journal = {SIAM J. Sci. Comput.},
  title   = {A Low-Rank Projector-Splitting Integrator for the {V}lasov--{P}oisson Equation},
  year    = {2018},
  pages   = {B1330-B1360},
  volume  = {40},
}

@book{Lubich2008,
	author = {Lubich, C.},
	publisher = {European Mathematical Society},
	title = {{From quantum to classical molecular dynamics: reduced models and numerical analysis}},
	year = {2008},
	address = {Z\"{u}rich}
}

@Article{Lubich2015a,
  author    = {C. Lubich},
  journal   = {Appl. Math. Res. Express. AMRX},
  title     = {Time integration in the multiconfiguration time-dependent {H}artree method of molecular quantum dynamics},
  year      = {2015},
  pages     = {311--328},
  volume    = {2015},
  publisher = {Oxford University Press},
}

@Article{Meyer1990,
  author    = {H.-D. Meyer and U. Manthe and L. S. Cederbaum},
  journal   = {Chem. Phys. Letters},
  title     = {The multi-configurational time-dependent {H}artree approach},
  year      = {1990},
  pages     = {73--78},
  volume    = {165},
  publisher = {Elsevier},
}

@Book{Meyer2009,
  author    = {H.-D. Meyer and F. Gatti and G. A. Worth},
  publisher = {John Wiley \& Sons},
  title     = {Multidimensional quantum dynamics},
  year      = {2009},
}

@Article{Einkemmer2020,
  author    = {Einkemmer, L. and Ostermann, A. and Piazzola, C.},
  journal   = {J. Comput. Phys.},
  title     = {{A low-rank projector-splitting integrator for the Vlasov--Maxwell equations with divergence correction}},
  year      = {2020},
  pages     = {109063},
  volume    = {403},
  publisher = {Elsevier},
}

@article{Nonnenmacher2008,
  title={Dynamical low-rank approximation: applications and numerical experiments},
  author={Nonnenmacher, A. and Lubich, C.},
  journal={Math. Comput. Simul.},
  volume={79},
  number={4},
  pages={1346--1357},
  year={2008},
  publisher={Elsevier}
}

@Article{Ceruti2022,
  author  = {Ceruti, G. and Lubich, C.},
  journal = {BIT Numer. Math.},
  title   = {{An unconventional robust integrator for dynamical low-rank approximation}},
  year    = {2022},
  pages   = {23--44},
  volume  = {62},
}

@Article{Einkemmer2021a,
  author    = {Einkemmer, L. and Joseph, I.},
  journal   = {J. Comput. Phys.},
  title     = {{A mass, momentum, and energy conservative dynamical low-rank scheme for the Vlasov equation}},
  year      = {2021},
  pages     = {110495},
  volume    = {443},
  publisher = {Elsevier},
}

@Article{Cassini2021,
  author    = {Cassini, F. and Einkemmer, L.},
  journal   = {einkemmer huComput. Phys. Commun.},
  title     = {{Efficient 6D Vlasov simulation using the dynamical low-rank framework Ensign}},
  year      = {2022},
  pages     = {108489},
  volume    = {280},
  publisher = {Elsevier},
}

@Article{Kusch2022,
  author = {Kusch, J. and Whewell, B. and McClarren, R. and Frank, M.},
  title  = {{A low-rank power iteration scheme for neutron transport critically problems}},
  year   = {2022},
}

@Article{Kusch2021a,
  author    = {Kusch, J. and Stammer, P.},
  journal   = {ESAIM: Mathematical Modelling and Numerical Analysis},
  title     = {{A robust collision source method for rank adaptive dynamical low-rank approximation in radiation therapy}},
  year      = {2023},
  number    = {2},
  pages     = {865--891},
  volume    = {57},
  publisher = {EDP Sciences},
}

@Article{Ceruti2022a,
  author    = {Ceruti, G. and Kusch, J. and Lubich, C.},
  journal   = {BIT Numer. Math.},
  title     = {{A rank-adaptive robust integrator for dynamical low-rank approximation}},
  year      = {2022},
  pages     = {1149--1174},
  volume    = {62},
  publisher = {Springer},
}

@Article{Einkemmer2022,
  author    = {Einkemmer, L. and Ostermann, A. and Scalone, C.},
  journal   = {J. Comput. Phys.},
  title     = {A robust and conservative dynamical low-rank algorithm},
  year      = {2023},
  pages     = {112060},
  volume    = {484},
  publisher = {Elsevier},
}

@Article{AllmannRahn2022,
  author    = {F. Allmann-Rahn and R. Grauer and K. Kormann},
  journal   = {J. Comput. Phys.},
  title     = {{A parallel low-rank solver for the six-dimensional Vlasov{\textendash}Maxwell equations}},
  year      = {2022},
  pages     = {111562},
  volume    = {469},
  doi       = {10.1016/j.jcp.2022.111562},
  publisher = {Elsevier {BV}},
}

@Article{Einkemmer2021d,
  author    = {L. Einkemmer and J. Hu and L. Ying},
  journal   = {SIAM J. Sci. Comput.},
  title     = {{An Efficient Dynamical Low-Rank Algorithm for the Boltzmann-{BGK} Equation Close to the Compressible Viscous Flow Regime}},
  year      = {2021},
  number    = {5},
  pages     = {B1057--B1080},
  volume    = {43},
  doi       = {10.1137/21m1392772},
  publisher = {Society for Industrial {\&} Applied Mathematics ({SIAM})},
}

@Article{Coughlin2022,
  author    = {J. Coughlin and J. Hu},
  journal   = {J. Comput. Phys.},
  title     = {{Efficient dynamical low-rank approximation for the Vlasov-Amp{\`{e}}re-Fokker-Planck system}},
  year      = {2022},
  pages     = {111590},
  volume    = {470},
  doi       = {10.1016/j.jcp.2022.111590},
  publisher = {Elsevier {BV}},
}

@Article{Prugger2023,
  author    = {M. Prugger and L. Einkemmer and C.F. Lopez},
  journal   = {J. Comput. Phys.},
  title     = {A dynamical low-rank approach to solve the chemical master equation for biological reaction networks},
  year      = {2023},
  pages     = {112250},
  doi       = {10.1016/j.jcp.2023.112250},
  publisher = {Elsevier {BV}},
  vol       = {489},
}

@Article{Jahnke2008,
  author    = {T. Jahnke and W. Huisinga},
  journal   = {Bull. Math. Biol.},
  title     = {A Dynamical Low-Rank Approach to the Chemical Master Equation},
  year      = {2008},
  number    = {8},
  pages     = {2283--2302},
  volume    = {70},
  doi       = {10.1007/s11538-008-9346-x},
  publisher = {Springer Science and Business Media {LLC}},
}

@Book{Hairer1996,
  author    = {E. Hairer and G. Wanner},
  publisher = {Springer Berlin Heidelberg},
  title     = {{Solving Ordinary Differential Equations II, Stiff and Differential-Algebraic Problems}},
  year      = {1996},
}

@Article{Ascher1997,
  author  = {Ascher, U. M. and Ruuth, S. J. and Spiteri, R. J.},
  journal = {Applied Numerical Mathematics},
  title   = {{Implicit-explicit Runge-Kutta methods for time-dependent partial differential equations}},
  year    = {1997},
  number  = {2-3},
  pages   = {151--167},
  volume  = {25},
}

@Article{Einkemmer2023,
  author        = {Einkemmer, L.},
  journal       = {arXiv:2306.17526},
  title         = {{Accelerating the simulation of kinetic shear Alfv{\'e}n waves with a dynamical low-rank approximation}},
  year          = {2023},
  archiveprefix = {arXiv},
  copyright     = {arXiv.org perpetual, non-exclusive license},
  doi           = {10.48550/ARXIV.2306.17526},
  eprint        = {2306.17526},
  file          = {:http\://arxiv.org/pdf/2306.17526v1:PDF},
  keywords      = {Computational Physics (physics.comp-ph), Numerical Analysis (math.NA), FOS: Physical sciences, FOS: Mathematics},
  primaryclass  = {physics.comp-ph},
  publisher     = {arXiv},
}

@Article{Einkemmer2023a,
  author        = {Einkemmer, Lukas and Mangott, Julian and Prugger, Martina},
  title         = {A low-rank complexity reduction algorithm for the high-dimensional kinetic chemical master equation},
  year          = {2023},
  month         = sep,
  abstract      = {It is increasingly realized that taking stochastic effects into account is important in order to study biological cells. However, the corresponding mathematical formulation, the chemical master equation (CME), suffers from the curse of dimensionality and thus solving it directly is not feasible for most realistic problems. In this paper we propose a dynamical low-rank algorithm for the CME that reduces the dimensionality of the problem by dividing the reaction network into partitions. Only reactions that cross partitions are subject to an approximation error (everything else is computed exactly). This approach, compared to the commonly used stochastic simulation algorithm (SSA, a Monte Carlo method), has the advantage that it is completely noise-free. This is particularly important if one is interested in resolving the tails of the probability distribution. We show that in some cases (e.g. for the lambda phage) the proposed method can drastically reduce memory consumption and run time and provide better accuracy than SSA.},
  archiveprefix = {arXiv},
  copyright     = {arXiv.org perpetual, non-exclusive license},
  doi           = {10.48550/ARXIV.2309.08252},
  eprint        = {2309.08252},
  file          = {:Einkemmer2023a - A Low Rank Complexity Reduction Algorithm for the High Dimensional Kinetic Chemical Master Equation.pdf:PDF},
  keywords      = {Numerical Analysis (math.NA), Biological Physics (physics.bio-ph), Computational Physics (physics.comp-ph), FOS: Mathematics, FOS: Physical sciences},
  primaryclass  = {math.NA},
  publisher     = {arXiv},
}

\end{document}